\date{\today}
\newcommand{\R}{{\mathbb R}}       
\newcommand{\N}{{\mathbb N}}
\newcommand{\Sp}{{\mathbb S}}
\newcommand{\Z}{{\mathbb Z}}       
\newcommand{\Toh}{{ T_{\Omega, h}}}
\DeclarePairedDelimiter\abs{\lvert}{\rvert}
\DeclarePairedDelimiter\norm{\lVert}{\rVert}
\newcommand{\vertiii}[1]{{\left\vert\kern-0.25ex\left\vert\kern-0.25ex\left\vert #1 
		\right\vert\kern-0.25ex\right\vert\kern-0.25ex\right\vert}}
\DeclarePairedDelimiterX{\set}[1]{\{}{\}}{\setargs{#1}}
\NewDocumentCommand{\setargs}{>{\SplitArgument{1}{;}}m}
{\setargsaux#1}
\NewDocumentCommand{\setargsaux}{mm}
{\IfNoValueTF{#2}{#1} {#1\,\delimsize|\,\mathopen{}#2}}
\newtheorem{theorem}{Theorem}[section]
\newtheorem{lemma}[theorem]{Lemma}
\newtheorem*{theorem*}{Theorem}
\theoremstyle{definition}
\newtheorem{definition}[theorem]{Definition}
\numberwithin{equation}{section}
\begin{document}
	
	\title{ Weak type bounds for rough maximal singular integrals near $L^1$}

	\author{Ankit Bhojak}
	\address{Ankit Bhojak\\
		Department of Mathematics and Statistics\\
		Indian Institute of Technology Kanpur\\
		Kanpur-208016, India.}
	\email{ankitbjk@iitk.ac.in}
	
	\author{Parasar Mohanty}
	\address{Parasar Mohanty\\
		Department of Mathematics and Statistics\\
		Indian Institute of Technology Kanpur\\
		Kanpur-208016, India.}
	\email{parasar@iitk.ac.in}
	
	\thanks{}

	\begin{abstract}
		In this paper it is shown that for $\Omega\in L\log L(\Sp^{d-1})$, the rough maximal singular integral operator $T_\Omega^*$   is of weak type $L\log\log L (\R^d)$. Furthermore,  for $w\in A_1$ and $\Omega\in L^\infty(\Sp^{d-1})$,  it is  shown that  $T_\Omega^*$  is of weak type   $L\log\log L (w)$  with weight dependence  $[w]_{A_1}[w]_{A_{\infty}}\log([w]_{A_{\infty}}+1),$ which is same as the best known constant for the singular integral $T_\Omega$.
	\end{abstract}

\subjclass[2010]{Primary 42B20, 42B25}	
	\maketitle
	
	\section{Introduction}
	The rough maximal operator $M_{\Omega}$ for  $\Omega\in L^1(\Sp^{d-1})$  is defined as
	\[M_{\Omega}f(x)=\sup\limits_{r>0}\;\frac{1}{r^d}\int\limits_{\abs{x-y}\leq r}\abs[\Big]{\Omega\Bigl(\frac{x-y}{\abs{x-y}}\Bigr)f(y)}\;dy.\]
	Let   $\int_{\Sp^{d-1}}\Omega(\theta) d\theta=0$, where $d\theta$ is the surface measure on $\Sp^{d-1}$.  The rough singular integral is given by,
	\[ T_{\Omega}f(x)=p.v.\int\frac{1}{|x-y|^d}\Omega\Bigl(\frac{x-y}{|x-y|}\Big)f(y)\;dy,\]
	and the truncated  rough singular operator is defined as 
	\[ T^\epsilon_\Omega f(x) = \int\limits_{|x-y|>\epsilon} \frac{1}{|x-y|^d}\Omega\Bigl(\frac{x-y}{|x-y|}\Big)f(y)\;dy.\]
	Consider the  rough maximal singular operator,  
	\[ T_{\Omega}^*f(x)= \sup\limits_{\epsilon>0} |T^\epsilon_\Omega f(x)|.\]

	For  $\Omega\in L \log L(\Sp^{d-1})$  the rough singular integrals $T_\Omega$ as well as rough maximal singular integral operator  $T_\Omega^*$,  were shown to be bounded  in $L^p(\R^d)$ for $1<p<\infty$ by Calder\'{o}n and Zygmund \cite{CZ}. The same result was also established by Duoandikoetxea and Rubio de Francia \cite{DR}  for $\Omega\in L^q(\Sp^{d-1}),\; q>1$, using Fourier transform estimates and a double dyadic decomposition of the kernel $K(x)= p.v.\abs{x}^{-d}\Omega\bigl(\frac{x}{\abs{x}}\bigr)$.\\
	\par
	The case $p=1$  was more elusive. 
	For $\Omega\in {\rm Lip}(\Sp^{d-1})$,  $T_{\Omega}$ is a standard Calder\'{o}n-Zygmund operator hence bounded from $L^1(\R^d)$ to $L^{1,\infty}(\R^d)$.   In \cite{C} and \cite{CR}, Christ and Rubio de Francia showed that $M_\Omega$ is weak (1,1) for $\Omega\in L\log L(\Sp^{d-1})$. It was shown that $T_\Omega$ is weak $(1,1)$ in dimension two independently in \cite{CR} and \cite{Hof}. Finally, the case  for all dimensions  for $T_\Omega$ was resolved by Seeger \cite{S1} using microlocal analysis. However, the weak $(1,1)$ boundedness for $T_{\Omega}^*$ is an open problem even for $\Omega\in L^\infty(\Sp^{d-1})$, see \cite{S2}.\\
	\par
	The aim of this paper is to study the end point boundedness properties of the operator $T_{\Omega}^*$. 
	Using an extrapolation argument one can derive that $T_\Omega^* f$ is in $L^{1,\infty}$ if $f\in L\log L$ and supported in a cube 
	By a slight abuse of notation, we define an operator $T$ is of weak type $L\log\log L(\mu)$ for a measure $\mu$ if there exists a constant $C_{\mu,T}>0$ such that
	\[\mu(x\in\R^d:\abs{Tf(x)}>\alpha)\leq \frac{C_{\mu,T}}{\alpha}\int \abs{f(x)}\log\log\left(e^2+\frac{\abs{f(x)}}{\alpha}\right)\;d\mu (x).\]
	holds for all $\alpha>0$. We denote $\vertiii{T}_{L\log\log L(\mu)\to L^{1,\infty}(\mu)}$ to be the smallest constant $C_{\mu,T}>0$ satisfying the above inequality. We also write $T$ is weak type $L\log\log L$ if the underlying measure is the Lebesgue measure on $\R^d$. 
	
	Our main theorem is as follows:
	\begin{theorem}\label{Main}
		Let $\Omega\in L\log L(\Sp^{d-1})$ and $\int_{\Sp^{d-1}}\Omega(\theta) d\theta=0$. Then $T_{\Omega}^*$ is of weak type $L\log\log L$.
	\end{theorem}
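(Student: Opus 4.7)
The plan is to normalize $\alpha = 1$ by homogeneity and run a Calder\'on--Zygmund decomposition of $f$ at height $1$, writing $f = g + b$ with $b = \sum_Q b_Q$ in the usual way. The good part $g$ satisfies $\norm{g}_2^2 \lesssim \norm{f}_1$, so the $L^2$ bound of $T_\Omega^*$ from \cite{CZ} handles it with a clean weak $(1,1)$--type contribution via Chebyshev. The enlarged exceptional set $\bigcup 2Q$ has measure $\lesssim \norm{f}_1$, so it remains to estimate $T_\Omega^* b(x)$ pointwise on its complement.

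On this set the obstruction is the supremum over the truncation parameter $\epsilon$, which is exactly why weak $(1,1)$ for $T_\Omega^*$ is open even for $\Omega \in L^\infty$. To trade this supremum for a $\log\log$ loss I would use a Seeger--type angular decomposition $\Omega = \sum_{k\geq 0}\Omega_k$, where $\Omega_k$ has angular frequency $\sim 2^k$, so that $T_\Omega = \sum_k T_{\Omega_k}$. Each $T_{\Omega_k}$ satisfies Seeger's weak $(1,1)$ bound \cite{S1} with a constant growing polynomially in $k$, and the companion maximal average $M_{\Omega_k}$ has a similar bound by Christ--Rubio de Francia \cite{C, CR}.

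The next step is a Cotlar--style comparison passing from $T_{\Omega_k}$ to the maximal truncation $T_{\Omega_k}^*$. For $x \notin \bigcup 2Q$ one writes
\[ T_{\Omega_k}^\epsilon b_Q(x) = T_{\Omega_k} b_Q(x) - \int_{\abs{x-y}\leq\epsilon} K_k(x-y)\,b_Q(y)\,dy, \]
and splits cases according to whether $\epsilon$ is below, comparable to, or above $\ell(Q)$; combining the cancellation $\int b_Q = 0$ with the $2^{-k}$--scale angular smoothness of $\Omega_k$, the truncation tail is dominated by a constant multiple of $M_{\Omega_k} b_Q$, absorbing the supremum over $\epsilon$ at the cost of a factor polynomial in $k$. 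These pieces are then recombined by choosing a threshold $K \sim \log\log(e^2 + \abs{f})$: for $k \leq K$ one sums the $O(k^{O(1)})$ weak $(1,1)$ bounds on $T_{\Omega_k}^*$ directly, while for $k > K$ one uses the $L^2$ theory together with the rapid decay of $\norm{\Omega_k}_2$ forced by $\Omega \in L\log L(\Sp^{d-1})$. Optimizing yields exactly the $L\log\log L$ modulus on the right--hand side of the claimed inequality.

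The hard part will be the Cotlar step: keeping the loss polynomial in $k$ (rather than exponential) requires genuinely exploiting the angular smoothness of the Seeger piece $\Omega_k$ in the comparison between different truncation levels, and it is precisely the obstruction to making these constants independent of $k$ that prevents the full weak $(1,1)$ statement for $T_\Omega^*$. The $\log\log$ threshold will be the minimal concession that makes the summations over the CZ scales and the Seeger index $k$ compatible; anything weaker than polynomial control in the Cotlar step would only produce an $L\log L$ bound, recovering the extrapolation result mentioned in the introduction.
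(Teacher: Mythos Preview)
The central gap is your Cotlar step. With $\Omega_k$ smooth at angular scale $2^{-k}$, the kernel satisfies $|\nabla K_k(x)|\lesssim 2^k|x|^{-d-1}$, so any Cotlar-type comparison passing from $T_{\Omega_k}$ to $T_{\Omega_k}^*$ via kernel regularity inherits a constant $\sim 2^k$, not $k^{O(1)}$. The per-cube identity you write is correct, and for a single $b_Q$ with $x\notin 2Q$ the truncation tail is indeed $O(M_{|\Omega_k|}|b_Q|(x))$ with an absolute constant; but $T_{\Omega_k}^*b=\sup_\epsilon\bigl|\sum_Q T_{\Omega_k}^\epsilon b_Q\bigr|$, and pulling the supremum inside the sum over $Q$ destroys the cancellation between cubes that Seeger's weak $(1,1)$ argument for $T_{\Omega_k}$ relies on. You give no mechanism for keeping the loss polynomial in $k$, and as you yourself note, exponential loss collapses the bound to $L\log L$. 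Two further issues: your threshold $K\sim\log\log(e^2+|f|)$ is a function of $x$ and cannot index a fixed splitting of $\Omega$; and the ``rapid decay of $\|\Omega_k\|_2$'' you invoke is not forced by $\Omega\in L\log L(\Sp^{d-1})$, since $L\log L\not\subset L^2$ on the sphere. Finally, what you call a ``Seeger-type angular decomposition'' is not what \cite{S1} does: Seeger's splitting is a scale-dependent microlocal decomposition of each dyadic kernel piece, not a fixed Littlewood--Paley decomposition of $\Omega$.

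The paper's route is organized differently. It decomposes $f$, not $\Omega$, at double-exponential heights in the style of Seeger--Tao--Wright \cite{STW}: on each Calder\'on--Zygmund cube $Q$ one sets $f_Q^n=f_Q\chi_{\{2^{c_12^{n-1}}\alpha<|f_Q|\le 2^{c_12^n}\alpha\}}$. The kernel on the $i$-th annulus is smoothed to $K_n^i=K^i*\phi_{2^n-i}$, and the engine is the double-exponential $L^2$ bound $\bigl\|\sup_k\bigl|\sum_{i>k}(K_m^i-K_{m-1}^i)*h\bigr|\bigr\|_2\lesssim 2^{-c_22^m}\|h\|_2$, coming from the Fourier decay of $\widehat{K^i}$. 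Matching $f^n$ with $K_n^i$ makes all the $L^2$ good-part sums converge geometrically. The $\log\log$ loss then arises only in the bad part, from a trivial $L^1$ bound over the first $O(n)$ kernel--cube scale gaps for each $n$, producing exactly $\sum_n n\|f^n\|_1\sim\int|f|\log\log(e^2+|f|/\alpha)$; the remaining scale gaps are handled by Seeger's microlocal $L^2$/$L^1$ estimates together with a Cotlar-type inequality for the already-mollified pieces $K_m^i-K_{m-1}^i$, where the built-in smoothing is precisely what makes the maximal comparison go through.
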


	
	Our result improves the following result of Honz\'{i}k \cite{H},   proved very recently.  
	\begin{theorem}[\cite{H}]\label{Honzik}
		Let  $\Omega\in L^{\infty}(\Sp^{d-1})$ and  $\epsilon>0$. Then 
		\[ \|T_{\Omega}^*f\|_{L^{1,\infty}}\leq C_\epsilon\|f\|_{ L(\log\log L)^{2+\epsilon}} \]
		for functions $f$ supported in an unit cube and \[\norm{f}_{L(\log\log L)^{2+\epsilon}}=\inf\set[\Big]{\alpha>0:\int \abs{f}\left(\log\log \Bigl(e^2+\frac{\abs{f}}{\alpha}\Bigr)\right)^{2+\epsilon}\leq {\alpha}}.\]
	\end{theorem}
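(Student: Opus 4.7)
My plan is to combine Seeger's framework \cite{S1} for the weak $(1,1)$ bound of $T_\Omega$ with a careful discretization of the truncation supremum that loses only a $\log\log$ factor.

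\textbf{Step 1: Reduction to a dyadic maximal singular integral.} Decompose the kernel $K(x)=|x|^{-d}\Omega(x/|x|)$ as $K=\sum_{j\in\Z}K_j$, where $K_j$ is a smooth cutoff of $K$ to the annulus $\{2^{j-1}\le|x|\le 2^{j+1}\}$ and $\int K_j=0$. A standard comparison gives
\[
 T^{\epsilon}_{\Omega}f(x) = \sum_{2^j > \epsilon} K_j \ast f(x) + O(M_\Omega f(x)),
\]
so that $T_\Omega^* f \lesssim T^{\sharp}f + M_\Omega f$, with $T^{\sharp}f(x):=\sup_{N\in\Z}\bigl|\sum_{j\ge N} K_j\ast f(x)\bigr|$. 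Since $M_\Omega$ is already weak $(1,1)$ for $\Omega\in L\log L(\Sp^{d-1})$ by Christ-Rubio de Francia, it suffices to prove the weak $L\log\log L$ bound for $T^{\sharp}$.

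\textbf{Step 2: Calderón-Zygmund decomposition and Seeger's microlocal estimate.} At level $\alpha$, write $f=g+\sum_Q b_Q$ in the usual way. The good function is handled by the known $L^2$ boundedness of $T^{\sharp}$. For the bad part outside $\bigcup Q^*$, apply Seeger's microlocal decomposition $K_j=\sum_\nu K_{j,\nu}$ by thin sectors on $\Sp^{d-1}$ whose aperture is tuned to a parameter $s\ge 0$ measuring the separation $j-s_Q$, where $2^{s_Q}=\ell(Q)$. The core $L^2$ almost-orthogonality estimate takes the shape
\[
 \Bigl\|\sum_Q \sum_{j\ge s_Q+s} K_j \ast b_Q \Bigr\|^{2}_{L^2((\bigcup Q^*)^c)} \lesssim 2^{-\delta s}\,\alpha\,\|\Omega\|_{L\log L}\,\|f\|_1
\]
for some $\delta>0$, which, together with Chebyshev and summation in $s$, recovers the weak $(1,1)$ bound for $T_\Omega$ acting on $b$.

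\textbf{Step 3: Handling the supremum in $N$.} Partition $\Z$ into doubly-exponential blocks $I_k:=[2^{2^k},2^{2^{k+1}})$ and split
\[
 T^{\sharp} b(x) \le \sup_k\Bigl|\sum_{j \ge 2^{2^k}} K_j \ast b(x)\Bigr| + \sup_k \max_{N\in I_k}\Bigl|\sum_{j=2^{2^k}}^{N-1} K_j \ast b(x)\Bigr|.
\]
The first supremum ranges over only $O(\log\log(\|f\|_1/\alpha))$ non-trivial $k$'s, so a union bound on Chebyshev applied to Seeger's estimate for each fixed starting scale yields precisely the $L\log\log L$ gauge.

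\textbf{The main obstacle} is the second, short-range maximum: within $I_k$ there are up to $2^{2^{k+1}}$ partial sums, forbidding the use of a naive triangle inequality. I would control it by a Rademacher-Menshov-type square function on the $L^2$ side, leveraging the almost-orthogonality of the pieces $K_j\ast b$ across $j$ that is already implicit in Seeger's proof. The Rademacher-Menshov cost is polynomial in $\log|I_k|=2^k$, and must be absorbed by the geometric $2^{-\delta s}$ decay of the microlocal estimate, which forces the shift parameter $s$ to be chosen $\gtrsim k$. Calibrating the doubly-exponential grouping precisely so that this absorption takes place without disturbing the $\log\log$ budget is the delicate point; a cruder (single-exponential) grouping would produce only the worse gauge $L(\log\log L)^{1+\gamma}$, in line with Honz\'ik's result \cite{H}.
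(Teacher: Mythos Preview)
First, note that the statement you attempt is quoted from \cite{H} and is not proved independently in this paper; instead the paper establishes the stronger Theorem~\ref{Main} (weak type $L\log\log L$ for all $\Omega\in L\log L(\Sp^{d-1})$), from which Honz\'ik's local estimate is deduced in the introduction. So the relevant comparison is between your outline and the proof of Theorem~\ref{Main}.

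Your Steps 1 and 2 are standard and match both \cite{S1} and the paper. The real gap is Step~3. The assertion that ``the first supremum ranges over only $O(\log\log(\norm{f}_1/\alpha))$ non-trivial $k$'s'' is neither justified nor correct. Your blocks $I_k=[2^{2^k},2^{2^{k+1}})$ do not partition $\Z$ (the truncation index takes all integer values, in particular arbitrarily negative ones corresponding to small Calder\'on--Zygmund cubes), and there is no mechanism in the Calder\'on--Zygmund data that restricts the number of relevant coarse truncation levels to $\log\log(\norm{f}_1/\alpha)$. The $\log\log$ in \cite{H} and in this paper does \emph{not} come from counting truncation scales; it comes from a doubly-exponential decomposition of the \emph{kernel} (in \cite{H}) or of the \emph{function} (here). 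Your short-range argument is also incoherent as written: the separation parameter $s=j-\log_2\ell(Q)$ is not a free parameter you can ``choose $\gtrsim k$''; it is dictated by the cube and the scale. With $j\in I_k$ one has $j\sim 2^{2^k}$, so $s$ and $k$ live on entirely different scales, and the proposed absorption of the Rademacher--Menshov loss has no precise meaning.

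For contrast, the paper never groups the truncation index. Following \cite{STW} it splits the bad function into doubly-exponential height layers $f^n=f\chi_{\{2^{c_12^{n-1}}\alpha<\abs{f}\le 2^{c_12^n}\alpha\}}$ and smooths the kernel at the matched scales $K_n^i=K^i*\phi_{2^n-i}$. The supremum over $k$ on the bad part is then controlled by the Cotlar-type inequality of Lemma~\ref{Cotlar}, which replaces $\sup_k$ by a sum over $s2^{n+2}$ arithmetic progressions of Hardy--Littlewood maximal functions of the Seeger pieces. Seeger's microlocal decay $2^{-\delta s}$ (Lemma~\ref{Seeger}) absorbs that polynomial count whenever $s>Cn$; for the finite range $3\le s\le Cn$ one uses the trivial $L^1$ bound, and since $n\sim\log\log(\abs{f}/\alpha)$ on the $n$-th layer this is precisely where the single $\log\log$ factor appears. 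If you want to repair your argument, the missing idea is exactly this pairing of a doubly-exponential decomposition of $f$ (or of the kernel, as in \cite{H}) with a Cotlar inequality, rather than a doubly-exponential grouping of the truncation parameter.
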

	We would like to indicate the following improvements of \Cref{Honzik}. 
	First it encompasses larger domain   $L\log\log L(\R^d)$, secondly  it   extends the class of $\Omega\in L^\infty(\Sp^{d-1})$ to $L\log L(\Sp^{d-1})$. Lastly,  
	\Cref{Main} also implies a local Orlicz space estimate like \Cref{Honzik}. Namely for $f$ supported in a unit cube $\mathcal{Q}$, we have
	\[ \|T_{\Omega}^*f\|_{L^{1,\infty}}\lesssim\norm{f}_{ L\log\log L(\mathcal{Q})}.\]
	To see this, assume $\norm{f}_{ L\log\log L(\mathcal{Q})}\leq 1$. For $\alpha\leq 100$, observe that $T_\Omega^*f(x)\lesssim M_\Omega f(x)$ holds for $x\notin((100d)\mathcal{Q})$ and hence the above inequality follows from weak $(1,1)$ boundedness of $M_\Omega$. The case $\alpha>100$ follows from \Cref{Main}.

	
	One can observe that applying the method  given in \cite{H} to $T_\Omega$ only yields  \linebreak $T_\Omega : L(\log\log L)^{1+\epsilon}(\R^d)\to L^{1,\infty}(\R^d)$ boundedly.   In the aforementioned  method, the function $f$ is decomposed based on the $L(\log\log L)^\gamma$ averages for $\gamma>0$ instead of the more natural $L^1$ averages. Also, the approach in \cite{H} relies on the double dyadic decomposition of the kernel $K$ at the beginning, as in \cite{HRT}, which forces extra assumption on the  size of the function.  
	In contrast  to  the  decomposition considered  in \cite{H}, our method is similar in the spirit of  \cite{S1}, in the sense that if we apply this method for $T_\Omega$ we will recover the well known weak $(1,1)$ estimate. We will employ  a finer decomposition of  the function based on its size  as in \cite{STW}.

	In this paper we have also studied the operator with rough radial part.  Let $h\in L^\infty(\R^n)$ be a radial function. Define 
	\[T_{\Omega,h} f(x)=p.v. \int_{\R^d} \frac{h(x-y)}{|x-y|^{-d}} \Omega\Bigl(\frac{x-y}{|x-y|}\Big)  f(y) dy.\]
	This operator was shown to be bounded from $L^p(\R^d)$ to $L^p(\R^d),\,1<p<\infty,$ for $\Omega\in {\rm Lip}(\Sp^{d-1})$ by R. Fefferman \cite{F}. Later  Duoandikoetxea and Rubio de Francia \cite{DR} improved this result for $\Omega\in L^q(\Sp^{d-1})$ for some $1<q\leq \infty$ and further to $\Omega\in H^1(\Sp^{d-1})$ in \cite{FP}. We would like to mention that Bochner-Riesz operator at the critical index $\frac{d-1}{2}$ is a prime example of this class of operators.
	In \cite{CR} weak $(1,1)$ estimate for this operator was shown, assuming the $L^2$ boundedness of $T_{\Omega,h}$ for  $\Omega\in L^\infty(\Sp^{d-1})$ and $\partial_\theta\Omega\in L^\infty(\Sp^{d-1})$.  By standard argument the size condition $\Omega\in L^\infty(\Sp^{d-1})$ can be replaced by $\Omega\in L\log L(\Sp^{d-1})$. To the best of our knowledge weak $(1,1)$ boundedness of $\Toh$ for $h\in L^\infty$ and for  $\Omega\in L\log L(\Sp^{d-1})$, without assuming any smoothness condition, is not known.  	
	
	Define the maximal operator corresponding to $\Toh$ as 
	\[T^*_{\Omega,h}f(x)=\sup\limits_{\epsilon>0} \;\abs[\Big]{ \int_{|x-y|>\epsilon} \frac{h(x-y)}{|x-y|^{-d}} \Omega\Bigl(\frac{x-y}{|x-y|}\Big)  f(y) dy}.\]
	In \cite{DR} it was proved that $T^*_{\Omega,h}$ is bounded from $L^p(\R^d)$ to $L^p(\R^d)$ for $1<p<\infty$.  Our result is the following
	\begin{theorem}\label{homega}
		Let $\Omega\in L\log L(\Sp^{d-1})$ with $\int_{\Sp^{d-1}} \Omega(\theta d\theta=0$, $\partial_\theta \Omega\in L^\infty(\Sp^{d-1})$, and $h\in L^\infty(\R^d)$ be a radial function.  Then $T^*_{\Omega,h}$ is of weak type $L\log\log L$.
	\end{theorem}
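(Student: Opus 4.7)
My plan is to mirror the proof strategy used for \Cref{Main}, incorporating the bounded radial factor $h$ and exploiting the extra smoothness assumption $\partial_\theta\Omega\in L^\infty(\Sp^{d-1})$. The strategy rests on Fourier transform estimates for the dyadic pieces of the kernel of the type used by Duoandikoetxea--Rubio de Francia \cite{DR}, which are unaffected in strength by the insertion of an $L^\infty$ radial factor.

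First I would perform the dyadic decomposition $K=\sum_{j\in\Z}K_j$ with $K_j(x)=\psi(2^{-j}\abs{x})h(x)\abs{x}^{-d}\Omega(x/\abs{x})$ for a smooth bump $\psi$ adapted to $\abs{x}\sim 1$. Passing to polar coordinates in $\widehat{K_j}(\xi)$, the cancellation $\int_{\Sp^{d-1}}\Omega\,d\theta=0$ yields the low-frequency estimate
\[\abs{\widehat{K_j}(\xi)}\lesssim\norm{h}_\infty\norm{\Omega}_{L^1}\,(2^j\abs{\xi}),\]
while a single integration by parts in $\theta$, justified by $\partial_\theta\Omega\in L^\infty$, produces the high-frequency estimate
\[\abs{\widehat{K_j}(\xi)}\lesssim\norm{h}_\infty\norm{\partial_\theta\Omega}_\infty\,(2^j\abs{\xi})^{-1/2}.\]
These are of the same form as the Fourier bounds driving the proof of \Cref{Main}.

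Second, the supremum over $\epsilon>0$ in the definition of $T^*_{\Omega,h}$ should be reduced to a supremum over dyadic truncations. For $2^j\leq\epsilon<2^{j+1}$, the difference between $T^\epsilon_{\Omega,h}f(x)$ and the convolution sum $\sum_{k>j}K_k*f(x)$ is pointwise dominated by $C\norm{h}_\infty M_{\abs{\Omega}} f(x)$, and the weak $(1,1)$ bound for $M_\Omega$ of Christ--Rubio de Francia \cite{C,CR} disposes of this error. Hence it suffices to estimate the dyadic maximal operator $\sup_j\abs{\sum_{k>j}K_k*f}$.

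Finally, I would apply to this dyadic maximal operator the same scheme used for $T^*_\Omega$: a Calder\'on--Zygmund decomposition of $f$ at the level $\alpha$, a size-based refinement of the bad part in the spirit of \cite{STW}, and a Seeger-type microlocal decomposition of each $K_j$ driven by the Fourier estimates above. I expect the main obstacle to be bookkeeping rather than conceptual: one must verify that the $L^2$ orthogonality and off-diagonal bounds underlying the microlocal scheme retain their quantitative form with $h$ present, and that the passage from $T^\epsilon_{\Omega,h}$ to its dyadic version introduces no term demanding smoothness of $h$. Since the microlocal decomposition acts on $\widehat{K_j}$ directly and $h$ enters only as the multiplicative constant $\norm{h}_\infty$, both points follow from the estimates above, and the $L\log\log L$ endpoint from \Cref{Main} is recovered intact.
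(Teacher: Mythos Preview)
Your plan tracks the paper's proof closely through the reduction to dyadic truncations, the decomposition of $f$, and the treatment of the good parts $\mathcal{H}_{k,j}$, $j=1,\dots,4$; all of that goes through with $h$ contributing only the harmless factor $\norm{h}_\infty$. The gap is in your handling of the bad part, specifically the piece the paper calls $\mathcal{B}_2^-$.

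You propose to run the Seeger microlocal decomposition on the pieces $H_m^i$ exactly as in \Cref{Main}, asserting that ``the microlocal decomposition acts on $\widehat{K_j}$ directly and $h$ enters only as the multiplicative constant $\norm{h}_\infty$.'' This is not correct. Seeger's scheme, as invoked in \Cref{Seeger}, requires the spatial-side hypothesis \eqref{S2}: uniform control of the \emph{radial} derivatives $\partial_r^l H_m^i(r\theta)$. In the setting of \Cref{Main} this holds with $C_N=2^{\delta s}$ because the underlying kernel $K^{i-}(r\theta)=r^{-d}\Omega^-(\theta)\beta(2^{-i}r)$ is smooth in $r$ (the angular factor $\Omega^-$ is untouched by $\partial_r$). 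Once you insert the radial factor $h(r)$, the kernel becomes $K^{i-}(r\theta)=h(r)\,r^{-d}\Omega^-(\theta)\beta(2^{-i}r)$, and any radial derivative lands on $h$, for which you have no regularity whatsoever. The mollification by $\phi_{2^m-i}$ does not rescue this: it smooths at scale $2^{i-2^m}$, so each $\partial_r$ costs a factor $2^{2^m}$, destroying the required bound $C_N\lesssim 2^{\delta s}$. The paper states this explicitly: ``Due to unavailability of Seeger type estimate \eqref{S2} we will follow the approach of Christ and Rubio de Francia.''

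The hypothesis $\partial_\theta\Omega\in L^\infty(\Sp^{d-1})$ is there precisely to supply a substitute for \eqref{Seeger1}--\eqref{Seeger2} at this step, not (as you use it) merely to produce Fourier decay of $\widehat{K_j}$, which is already available from \cite{FP} under $\Omega\in L\log L$ alone. The paper replaces the Seeger $L^2$ estimate for $\sum_{i\in I}H_m^i*B_{i-s}^n$ by expanding the square, controlling the diagonal terms via inequality (2.3) of \cite{CR} and the off-diagonal terms via Lemma~6.1 of \cite{CR}; both of these inputs require the angular Lipschitz bound on $\Omega$. Your outline should therefore drop the Seeger decomposition for the term $\mathcal{B}_2^-$ and insert this Christ--Rubio de Francia almost-orthogonality argument in its place; the rest of your plan is sound.
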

	
	In the last decade quantitative weighted boundedness of singular integral operator has been one of the main theme of research in harmonic analysis. The celebrated result of Hyt\"onen \cite{Hy} finally showed that the dependence of  the $L^2(w)$ boundedness of the  Calder\'on-Zygumd operator $T_{_{CZ}}$ on the $A_2$ characteristic   of the weight $w$ is linear.  The sparse techniques, which evolved,  while addressing this problem,  proved to be very significant.  It is still unknown whether the quadratic dependence of $T_\Omega: L^2(w) \to L^2(w)$ on the $A_2$ characteristic of $w$  is sharp  \cite{HRT, CCDO, L2}.   For the end point $p=1$, the operator $T_\Omega$ is known to be bounded from $L^1(w)\to L^{1,\infty}(w)$ for $w\in A_1$ (\cite{V, FS2}.  To state a quantitative  version of this result we will need the following  definitions. 
	\begin{definition}
		Let $w:\R^d\to\R$ be a non-negative function. We say $w\in A_1$ if
		\[[w]_{A_1}=\sup\limits_{Q\subset\R^d}\;\Bigl(\frac{1}{\abs{Q}}\int\limits_Qw(t)\;dt\Bigr)\norm{w^{-1}}_{L^{\infty}(Q)}\]
		is finite. For $1<p<\infty$, we say $w\in A_p$ if
		\[[w]_{A_p}=\sup\limits_{Q\subset\R^d}\;\Bigl(\frac{1}{\abs{Q}}\int\limits_Qw(t)\;dt\Bigr)\Bigl(\frac{1}{\abs{Q}}\int\limits_Qw(t)^{-\frac{1}{p-1}}\;dt\Bigr)^{p-1}\]
		is finite, and $w\in A_{\infty}$ if
		\[[w]_{A_{\infty}}=\sup\limits_{Q\subset\R^d}\;\Bigl(\int\limits_Qw(t)\;dt\Bigr)^{-1}\Bigl(\int\limits_{Q}M(w\chi_Q)(t)\;dt\Bigr)\]
		is finite, where $M$ is the Hardy-Littlewood maximal function.
	\end{definition}
	In \cite{LOP, HP1} it was shown that 
	\begin{equation}
		\norm{T_{_{CZ}}}_{L^1(w)\to L^{1,\infty}(w)}\lesssim [w]_{A_1}\log([w]_{A_\infty}+1).
	\end{equation}
	Moreover, this dependence on weight characteristics is optimal \cite{LNO}. In \cite{L1, HP2} it was shown that the above dependence also holds for the maximal Calder\'on-Zygmund operator $T_{_{CZ}}^*$. i.e.
	\begin{equation}
		\norm{T_{_{CZ}}^*}_{L^1(w)\to L^{1,\infty}(w)}\lesssim [w]_{A_1}\log([w]_{A_\infty}+1).
		\label{CZw}
	\end{equation}
	
	For $\Omega\in L^\infty(\Sp^{d-1})$  the following bound for $T_\Omega$  was obtained in   \cite{LPRR},
	\begin{equation}
		\norm{T_{\Omega}}_{L^1(w)\to L^{1,\infty}(w)}\lesssim [w]_{A_1}[w]_{A_\infty}\log([w]_{A_\infty}+1).
	\end{equation}
	It is not known whether the extra constant $[w]_{A_\infty}$, in comparison to $T_{_{CZ}}$,  can be removed.  The following result asserts a similar weighted dependence for $T_\Omega^*$. 
	\begin{theorem}\label{Mainw}
		Let $\Omega\in L^{\infty}(\Sp^{d-1})$ with $\int_{\Sp^{d-1}}\Omega(\theta)\;d\theta=0$. Then for $w\in A_1$ and $\alpha>0$, we have,
		\begin{equation}
			\vertiii{T_{\Omega}^*}_{L\log\log L(w)\to L^{1,\infty}(w)}\lesssim [w]_{A_1}[w]_{A_{\infty}}\log([w]_{A_{\infty}}+1).
		\end{equation}
	\end{theorem}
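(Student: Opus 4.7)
The proof proceeds in parallel to \Cref{Main}, replacing each unweighted ingredient by its weighted analogue and tracking constants carefully. Fix $\alpha>0$ and $f\in L\log\log L(w)$. The first step is a Calder\'on--Zygmund decomposition of $f$ at height $\alpha$ performed relative to the measure $w\,dx$: stopping whenever the $w$-weighted average of $|f|$ on a dyadic cube exceeds $\alpha$ produces disjoint cubes $\{Q\}$ and a decomposition $f=g+\sum_Q b_Q$ with $\int b_Q\,dx=0$ and $w(\cup_Q Q)\lesssim [w]_{A_1}\alpha^{-1}\|f\|_{L^1(w)}$. One factor of $[w]_{A_1}$ is incurred here, coming from the passage between Lebesgue and $w$-averages when exploiting the stopping condition against the $L\log\log L(w)$ norm.

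The good part $g$ satisfies $\|g\|_{L^\infty}\lesssim[w]_{A_\infty}\alpha$ on the exceptional set and is handled by an $L^2(w)$ estimate. Since $\Omega\in L^\infty$, $T_\Omega^*$ is bounded on $L^2(w)$ with norm controlled by a polynomial in $[w]_{A_2}\lesssim[w]_{A_1}[w]_{A_\infty}$, and combined with $\|g\|_{L^2(w)}^2\lesssim [w]_{A_\infty}\alpha\|f\|_{L^1(w)}$ from the stopping condition, Chebyshev yields a contribution consistent with the target.

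For the bad part one linearizes the supremum by a measurable $\epsilon(x)$ and studies $T_\Omega^{\epsilon(\cdot)}(\sum_Q b_Q)$. Following the decomposition strategy of \Cref{Main}, write the kernel as $K=\sum_k K_k$ dyadically and stratify the bad cubes by side length $2^{s(Q)}$. For interactions with $k\gg s(Q)$, Duoandikoetxea--Rubio de Francia type Fourier estimates combined with weighted Littlewood--Paley theory give an $L^2(w)$ bound with geometric decay in $k-s(Q)$; for $k\lesssim s(Q)$ the truncation removes the singular part and one uses the mean zero of $b_Q$ together with the size-based splitting of $f$ from \cite{STW} to obtain an $L^1(w)$ bound with constant $[w]_{A_1}$. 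The logarithmic factor $\log([w]_{A_\infty}+1)$ arises from interpolating between these two regimes across the $O(\log[w]_{A_\infty})$ scales which separate the $L^2(w)$ and $L^1(w)$ estimates, exactly in the spirit of \eqref{CZw}.

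The principal obstacle is the interaction between the maximal truncation and the rough kernel: the parameter $\epsilon(x)$ couples to both the kernel scale $2^k$ and the cube scale $2^{s(Q)}$, and a naive estimate accumulates an extra $[w]_{A_\infty}$ per scale, spoiling the target dependence. The key technical step is to pair each truncation level only with Littlewood--Paley pieces at compatible scale, so that the supremum in $\epsilon$ is absorbed into the existing dyadic sum without additional weight cost; this is precisely where the unweighted $L\log\log L$ argument of \Cref{Main} must be carried out weight by weight rather than quoted as a black box. Once this coupling is handled, the remaining estimates mirror the unweighted case and the bound $[w]_{A_1}[w]_{A_\infty}\log([w]_{A_\infty}+1)$ falls out.
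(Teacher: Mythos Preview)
Your outline diverges from the paper at the very first step and the divergence is not innocent. You propose a Calder\'on--Zygmund decomposition with respect to $w\,dx$; the paper uses the \emph{unweighted} decomposition of \S\ref{function} at height $\alpha$. This matters because the entire Seeger machinery (\Cref{Seeger}) and the Cotlar inequality (\Cref{Cotlar}) require $\|b_Q\|_1\lesssim\alpha|Q|$, which the unweighted stopping gives and the weighted one does not. Starting from weighted averages you have no access to the $L^2$ decay $2^{-\delta_1 s}$ that drives the whole bad-part argument.

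Your treatment of the good part also fails to deliver the stated constant. Invoking $\|T_\Omega^*\|_{L^2(w)\to L^2(w)}$ together with Chebyshev gives at best $[w]_{A_2}^{4}$ (the best known $L^2(w)$ bound for $T_\Omega$ is quadratic in $[w]_{A_2}$, and it is open whether this is sharp), which is far worse than $[w]_{A_1}[w]_{A_\infty}\log([w]_{A_\infty}+1)$. The paper avoids $L^2(w)$ entirely: it applies the Fefferman--Stein inequalities of \cite{DHL} (\Cref{FSi1}, \Cref{FSi2}) at the exponent $p_w=1+1/\log([w]_{A_\infty}+1)$, which land in $L^{p_w}(M_{r_w}w)$, and then uses the sharp reverse H\"older inequality (\Cref{Rhi}) to return to $L^1(w)$. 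The choice of $p_w$ near $1$ is precisely what produces $\log([w]_{A_\infty}+1)$ rather than a power.

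Finally, the bad-part interpolation is not between an $L^2(w)$ and an $L^1(w)$ estimate across $O(\log[w]_{A_\infty})$ scales. The paper modifies the kernel decomposition to $K_n^i=K^i*\phi_{\Delta 2^n-i}$ with $\Delta\sim[w]_{A_\infty}\log([w]_{A_\infty}+1)$, proves an \emph{unweighted} level-set decay $|E_\lambda^s|\lesssim\lambda^{-2}2^{-\delta s}\sum|Q|$ by rerunning the argument of \S\ref{bad}, pairs it with the trivial weighted bound $v(E_\lambda^s)\lesssim\lambda^{-1}\sum|Q|\inf_Q Mv$, and interpolates the two via the Fefferman--Sato scheme of \cite{FS1} (\Cref{w_est2}). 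The $L\log\log L$ loss enters only in the range $3\le s\le Cn s_w$, where one sums trivially in $s$ and picks up $\sum_n n\|f_Q^n\|_1$. None of these mechanisms are visible in your sketch.
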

	We would like to remark that if one follows the method given in \cite{V} one gets that $T_\Omega^*$ is of weak type $L\log\log L(w)$. However our proof  is inspired by the method given in \cite{LPRR} which yields  better weight constant.  Following the arguments of \Cref{Mainw}, a similar result can be obtained for the operator $T^*_{\Omega,h}$.

	In section \S 2 we will give the proof of \Cref{Main},  \S 3 will contain the sketch of the proof of \Cref{homega}, and \S 4 is devoted to the proof of \Cref{Mainw}. Throughout this paper $A\lesssim B$ means there is a constant $C$ depending only on the dimension $d$ such that $A\leq CB$.  For any cube $Q$ we denote $l(Q)$ to be the length of the cube.  For us, a dyadic cube $Q$ means $Q=2^j\Bigl((0,1]^d+l\Bigr)$ for some $j\in\Z$ and $l\in\Z^d$. For $C>0$, we define $CQ$ to be the cube with the same centre as $Q$ and length $Cl(Q)$. Also, for a non-negative weight $w$ and a set $E\subset\R^d$, we denote $\abs{E}$ to be the Lebesgue measure of the set $E$ and $w(E)=\int_E\;w(t)\;dt$.
	\section{Proof of \Cref{Main}}
	We begin by discretising the supremum in $T_{\Omega}^*$. Let $\beta\in C_c^{\infty}(\R^d)$ be a function supported on the annulus $\{\frac{1}{2}\leq\abs{x}\leq 2\}$ and $\sum\limits_{i\in\Z}\beta_i(x)=1$ for $x\neq 0$, where $\beta_i(x)=\beta(2^{-i}x)$. We have
	\begin{equation}
		T_{\Omega}^*f\leq M_{\Omega}f+\sup\limits_{k\in\Z}\;\abs[\Big]{\sum\limits_{i>k}K^i*f},
		\label{discr}
	\end{equation}
	where $K^i(x)=K(x)\beta_i(x)$. To see the last inequality, for an $\epsilon>0$, let $k_{\epsilon}=[\log_2\epsilon]$. Then we have $T^\epsilon_{\Omega}f\leq \abs{K^{k_{\epsilon}}}*\abs{f}+\abs{\sum\limits_{i>k_{\epsilon}}K^i*f}\lesssim M_{\Omega}f+\sup\limits_{k\in\Z}\;\abs{\sum\limits_{i>k}K^i*f}$ and hence \eqref{discr} follows.

	$M_{\Omega}$ is known to be weak type $(1,1)$, see \cite{CR}. So it remains to estimate $\sup\limits_{k\in\Z}\;\abs{\sum\limits_{i>k}K^i*f}$.
	
	\subsection{Decomposition of the function} \label{function}
	Let $f\in L\log\log L(\R^d)$ and $\alpha>0$. We choose a maximal collection $\{Q\}_{Q\in\mathcal{F}}$ of dyadic cubes such that,
	\[\alpha<\frac{1}{\abs{Q}}\int\limits_{Q}\;\abs{f}\leq 2^d\alpha.\]
	We define the exceptional set $E=\cup_{_{Q\in\mathcal{F}}}(100d)Q$. Therefore $\abs{E}\lesssim\frac{1}{\alpha}\norm{f}_1$. 		We set $$f=g+\sum\limits_{Q\in\mathcal{F}}f_{_Q},$$ where 
	\begin{eqnarray*}
		g&=&f\chi_{_{\R^d\setminus \cup Q}}+f\chi_{_{(\cup Q)\cap\{\abs{f}\leq2^{c_1}\alpha\}}},\\
		f_{_{Q}} &=&f\chi_{_{Q\cap\{\abs{f}>2^{c_1}\alpha\}}},
	\end{eqnarray*}
	and $0<c_1<\frac{1}{4}$ is independent of $\alpha$ and is to be chosen later.\\
	By Lebesgue differentiation theorem, we have $\abs{g}\lesssim\alpha$ and hence $\norm{g}_2^2\lesssim\alpha\norm{f}_1$. We further decompose \[f_{_Q}=\sum\limits_{n=1}^{\infty}f_{_Q}^n,\] where
	\[f_{_Q}^n=f_{_Q}\chi_{\{2^{c_1 2^{(n-1)}}\alpha<\abs{f_{_Q}}\leq 2^{c_1 2^n}\alpha\}}.\]
	Clearly, we have $\sum\limits_{n=1}^{\infty}\frac{1}{\abs{Q}}\int\abs{f_{_Q}^n}\lesssim\alpha$. 		We write $f_{_Q}^n=g_{_Q}^n+b_{_Q}^n$, where
	\[g_{_Q}^n(x)=\Bigl(\frac{1}{\abs{Q}}\int f_{_Q}^n\Bigr)\chi_Q(x),\text{ and }b_{_Q}^n=f_{_Q}^n-g_{_Q}^n.\]
	For $n\in\N$, we set $g^n=\sum\limits_{Q\in\mathcal{F}}g_{_Q}^n, \;b^n=\sum\limits_{Q\in\mathcal{F}}b_{_Q}^n$, and $f^n=\sum\limits_{Q\in\mathcal{F}}f_{_Q}^n$.

	We state some basic properties of these functions which can be verified easily.
	\begin{align}
		\label{P1}\norm{f^n}_2^2&\lesssim 2^{c_12^n}\alpha\norm{f}_1.\\
		\label{P2}\int b_{_Q}^n&=0\;\forall n\in\N,\;Q\in\mathcal{F}.\\
		\label{P3}\norm[\Big]{\sum\limits_n\sum\limits_{Q\in\mathcal{F}}\abs{g_{_Q}^n}}_{\infty}&\lesssim\alpha.\\
		\label{P4}\sum\limits_n\sum\limits_{Q\in\mathcal{F}}\Bigl(\norm{g_{_Q}^n}_1+\norm{b_{_Q}^n}_1\Bigr)&\lesssim\sum\limits_n\sum\limits_{Q\in\mathcal{F}}\norm{f^n_{_Q}}_1\lesssim\norm{f}_1.
	\end{align}
	By (\ref{P3}) and (\ref{P4}), we have $\norm{\sum\limits_ng^n}_2^2\lesssim\alpha\norm{f}_1$.
	Let $\phi\in C_c^{\infty}(\R^d)$ be a function supported in the ball $B(0,\frac{1}{2})$ satisfying $\int\phi=1$.
	We smoothen the kernel $K^i$ by writing,
	\begin{align*}
		K_0^i&=K^i*\phi_{-i},\\
		K_n^i&=K^i*\phi_{2^n-i}\;\text{for}\;n\in\N,
	\end{align*}
	where $\phi_j(x)=2^{jd}\phi(2^jx)$.\\
	To estimate the level set $\{x\in E^c:\sup\limits_{k\in\Z}\;\abs{\sum\limits_{i>k}K^i*f}>\alpha\}$, We write
	\begin{equation}
		\begin{aligned}\label{break}
			\sum\limits_{i>k}K^i*f=&\sum\limits_{i>k}K^i*g+\sum\limits_{i>k}\sum\limits_{n\geq 1}(K^i-K_n^i)*f^n+\sum\limits_{i>k}\sum\limits_{n\geq 1}(K_n^i-K_0^i)*g^n\\
			&+\sum\limits_{i>k}\sum\limits_{n\geq 1}K_0^i*f^n+\sum\limits_{i>k}\sum\limits_{n\geq 1}(K_n^i-K_0^i)*b^n.
		\end{aligned}
	\end{equation}
	We define 
	\[\mathcal{H}_{k,1}= \sum\limits_{i>k}K^i*g,\hspace{0.7cm}\mathcal{H}_{k,2} = \sum\limits_{i>k}\sum\limits_{n\geq 1}(K^i-K_n^i)*f^n,\hspace{1cm}\mathcal{H}_{k,3}=\sum\limits_{i>k}\sum\limits_{n\geq 1}(K_n^i-K_0^i)*g^n\]
	\[\mathcal{H}_{k,4}=\sum\limits_{i>k}\sum\limits_{n\geq 1}K_0^i*f^n,\hspace{0.7cm}\mathcal{H}_{k,b}= \sum\limits_{i>k}\sum\limits_{n\geq 1}(K_n^i-K_0^i)*b^n.\]
	\subsection{Estimate for Good parts}
	One can  observe that 
	\begin{equation}
		\abs{\widehat{K^i}(\xi)}\lesssim \min\{\abs{2^i\xi}^{a},\abs{2^i\xi}^{-a}\},\;0<a<1
		\label{Fourierestimate}
	\end{equation}
	also holds for $\Omega\in L\log L(\Sp^{d-1})$ instead of more restrictive  $\Omega\in L^{\infty}(\Sp^{d-1})$. 
	Taking into account of above observation and imitating the proof as in   \cite{DR} (Theorem E) we have the following. 
	\begin{lemma}
		Let $\Omega\in L\log L(\Sp^{d-1})$ with $\int_{\Sp^{d-1}} \Omega(\theta)d\theta=0$  and $g\in L^2(\R^d)$. Then 
		\[ \norm[\Big]{\sup\limits_{k\in\Z}\; \abs[\Big]{ \sum\limits_{i>k}K^i\ast g}}_2\lesssim \|g\|_2.\]
		\label{Fan}
	\end{lemma}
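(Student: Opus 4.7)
The plan is to adapt the proof of Theorem E in \cite{DR} to the weaker hypothesis $\Omega\in L\log L(\Sp^{d-1})$. That argument uses only the Fourier decay \eqref{Fourierestimate}, which the authors point out just above the lemma continues to hold when $\Omega\in L\log L(\Sp^{d-1})$. So in principle the argument is the same; let me outline the steps.

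First, Plancherel combined with the elementary bound $\sum_{i\in\Z}\min\{\abs{2^i\xi}^a,\abs{2^i\xi}^{-a}\}\lesssim 1$ gives the $L^2$ boundedness of the full operator $T_\Omega g=\sum_{i\in\Z} K^i\ast g$. Writing $\sum_{i>k}K^i\ast g=T_\Omega g-T^k g$ with $T^k g:=\sum_{i\leq k}K^i\ast g$, it therefore suffices to prove $\norm[\big]{\sup_k\abs{T^k g}}_2\lesssim\norm{g}_2$.

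Next, I would compare $T^k g$ with a smooth mollification of $T_\Omega g$. Fix $\Phi\in\mathcal{S}(\R^d)$ with $\hat\Phi(0)=1$, and set $\Phi_k(x)=2^{-kd}\Phi(2^{-k}x)$. Then $\sup_k\abs{\Phi_k\ast T_\Omega g}\lesssim M(T_\Omega g)$, and so the Hardy--Littlewood maximal theorem combined with the $L^2$-boundedness of $T_\Omega$ already controls this piece. It then remains to handle the error
\[E_k g:=T^k g-\Phi_k\ast T_\Omega g,\]
whose Fourier multiplier can be analysed using \eqref{Fourierestimate} together with the regularity of $\hat\Phi$ at the origin (useful when $\abs{2^k\xi}\ll 1$) and the Schwartz decay of $\hat\Phi$ at infinity (useful when $\abs{2^k\xi}\gg 1$). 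A Littlewood--Paley decomposition then extracts geometric decay of order $2^{-a|k-j|}$ between the cutoff scale $k$ and the frequency scale $2^{-j}$, after which the pointwise $\sup_k\abs{E_k g}$ is dominated by an $\ell^2_k$-summable square function whose $L^2$ norm is controlled by $\norm{g}_2$.

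The main technical obstacle is this almost-orthogonality step: verifying that \eqref{Fourierestimate} really produces genuine decay in $|k-j|$ on each Littlewood--Paley block, both above and below the frequency scale, so that the passage from the pointwise supremum to an $\ell^2_k$-square function is not lossy. Once this is in place the remaining ingredients ($L^2$-theory for $T_\Omega$, Hardy--Littlewood maximal theorem, Plancherel, Cauchy--Schwarz) are standard. I emphasize that the hypothesis $\Omega\in L\log L(\Sp^{d-1})$ enters only through \eqref{Fourierestimate}; the rest of the argument is soft.
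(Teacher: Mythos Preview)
Your proposal is correct and matches the paper's approach: the paper does not give a detailed proof of this lemma but simply observes that the Fourier decay \eqref{Fourierestimate} persists for $\Omega\in L\log L(\Sp^{d-1})$ and then defers to the argument of Theorem~E in \cite{DR}, which is precisely the mollification-plus-square-function scheme you have outlined.
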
  
	
	We will show that $\norm{\sup\limits_{k\in\Z}\;\abs{\mathcal{H}_{k,j}}}_2^2\lesssim\alpha\norm{f}_1$, for $j=1,2,3$.
	Then by Chebyshev's inequality, we have $\abs{\{x\in E^c:\sup\limits_{k\in\Z}\;\abs{\mathcal{H}_{k,j}(x)}>\alpha/5\}}\lesssim \frac{1}{\alpha}\norm{f}_1$ for $j=1,2,3$.\\
	First, we observe that $\norm{\sup\limits_{k\in\Z}\;\abs{\mathcal{H}_{k,1}}}_2^2\lesssim\norm{g}_2^2\lesssim\alpha\norm{f}_1$, where the first inequality follows from \Cref{Fan}. 
	
	To deal with $\mathcal{H}_{k,j},\;j=2,3$, we need $L^2$ estimates for the following intermediary operators defined as
	\begin{equation}
		\begin{aligned}
			T_{-1}^*h&=\sup\limits_k\;\abs{\sum\limits_{i>k}K_0^i*h},\\
			T_m^*h&=\sup\limits_{k\in\Z}\;\abs[Big]{\sum\limits_{i>k}(K_{m}^i-K_{m-1}^i)*h},\;m\in\N.
		\end{aligned}
	\end{equation}
	The maximal operator $T_{-1}^*$ behaves as maximally truncated Calder\'{o}n-Zygmund operator. Precisely, we have the following.
	\begin{lemma}\label{T_{-1}}
		For $\Omega\in L\log L(\Sp^{d-1})$, the operator $T_{-1}^*$	is bounded in $L^2$ and weak type $(1,1)$.
	\end{lemma}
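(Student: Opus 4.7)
The plan is to realize $T_{-1}^*$ as (essentially) the maximal truncation of a Calder\'on--Zygmund operator whose kernel is the smoothed sum $\mathcal K(x):=\sum_{i\in\Z}K_0^i(x)$. Once I verify that $\mathcal K$ is a standard CZ kernel and that the associated operator $T_0h:=\mathcal K*h$ is bounded on $L^2$, both conclusions of the lemma will follow from the classical Calder\'on--Zygmund theorem together with a Cotlar-type inequality.

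First I would check the pointwise kernel estimates $|\mathcal K(x)|\lesssim |x|^{-d}$ and $|\nabla\mathcal K(x)|\lesssim |x|^{-d-1}$. Since $K^i$ is supported on $\{|y|\sim 2^i\}$ and $\phi_{-i}$ on $\{|y|\leq 2^{i-1}\}$, the piece $K_0^i=K^i*\phi_{-i}$ is supported on $\{|x|\lesssim 2^{i+1}\}$, so for each $x\neq 0$ only a bounded number of indices $i\sim\log_2|x|$ contribute to $\mathcal K(x)$. Young's inequality with $\|\phi_{-i}\|_\infty\lesssim 2^{-id}$, $\|\nabla\phi_{-i}\|_\infty\lesssim 2^{-i(d+1)}$, and $\|K^i\|_1\lesssim\|\Omega\|_{L^1(\Sp^{d-1})}$ (polar coordinates, using the radiality of $\beta$) yields $\|K_0^i\|_\infty\lesssim 2^{-id}$ and $\|\nabla K_0^i\|_\infty\lesssim 2^{-i(d+1)}$. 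Summing over the finitely many contributing $i$ gives the claimed CZ kernel bounds.

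Next I would establish the $L^2$ bound for $T_0$. Combining the Fourier estimate \eqref{Fourierestimate} (valid under $\Omega\in L\log L(\Sp^{d-1})$) with $|\hat\phi(2^i\xi)|\leq 1$ gives $|\widehat{K_0^i}(\xi)|\lesssim \min(|2^i\xi|^a,|2^i\xi|^{-a})$, which sums geometrically over $i\in\Z$ to a uniform bound for $|\widehat{\mathcal K}(\xi)|$; Plancherel then gives $\|T_0h\|_2\lesssim\|h\|_2$. The classical CZ theorem now provides the weak $(1,1)$ bound for $T_0$. To pass to the maximal truncation, I would apply a Cotlar-type argument: comparing the smooth tail $\sum_{i>k}K_0^i$ to the sharp cut-off $\mathcal K\,\chi_{\{|y|>2^k\}}$, and then averaging $T_0h$ over a ball of radius $2^k$ using the kernel smoothness from Step 1, one arrives at the pointwise domination
\[T_{-1}^*h(x)\;\lesssim\; M(T_0h)(x)+M_\Omega h(x),\]
where $M$ is the Hardy--Littlewood maximal function and $M_\Omega$ is the Christ--Rubio de Francia maximal operator, which is weak $(1,1)$ for $\Omega\in L\log L(\Sp^{d-1})$. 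Both terms on the right are weak $(1,1)$ and $L^2$-bounded, which gives the lemma.

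The one genuinely delicate step is the Cotlar comparison. The gain from working with the smoothed pieces $K_0^i$ is that the convolution with $\phi_{-i}$ already supplies the regularity needed to replace the truncated operator by a spatial average of $T_0h$; the risk is that the error between the smooth and sharp truncations could involve something stronger than $M_\Omega h$, which would ruin weak $(1,1)$ without further regularity of $\Omega$. Verifying that the error is in fact pointwise dominated by $M_\Omega h$ -- using the size estimate on $K^i-K_0^i$ arising from a single application of the mean value theorem on $\phi_{-i}$ -- is the crux of the argument.
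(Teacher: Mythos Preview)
Your proposal is correct and follows essentially the same route as the paper: verify that $\mathcal K=\sum_i K_0^i$ is a standard CZ kernel via size and gradient bounds, establish $L^2$-boundedness through the Fourier estimate \eqref{Fourierestimate}, and then reduce $T_{-1}^*$ to the maximal truncation of this CZ operator.

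The only notable difference is in the last step, and here the paper's argument is simpler than the Cotlar comparison you outline. The paper does not average $T_0h$; instead it directly compares the smoothly truncated sum $\sum_{i>k}K_0^i$ with the sharply truncated kernel $S\chi_{\{|\cdot|>2^{k+1}\}}$, where $S=\mathcal K$. Because $K_0^i$ is supported in an annulus $\{|x|\sim 2^i\}$, the difference involves only the single piece $K_0^{k+1}$ restricted to $\{|x-y|\le 2^{k+1}\}$, and the already-established size bound $|K_0^{k+1}|\lesssim \|\Omega\|_1\,2^{-(k+1)d}$ shows this error is controlled by the Hardy--Littlewood maximal function $Mf$, not $M_\Omega f$. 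One then obtains
\[
T_{-1}^*f \lesssim T_{-1}^\sharp f + Mf,
\]
where $T_{-1}^\sharp$ is the standard maximal truncation of the CZ operator $T_{-1}$, whose weak $(1,1)$ bound is part of the classical theory. Thus the step you flag as ``genuinely delicate'' is in fact immediate: the smoothing by $\phi_{-i}$ has already averaged out the angular part of $\Omega$, so no rough maximal function enters the error term, and no mean-value argument on $K^i-K_0^i$ is needed.
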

	\begin{proof}
		We define the corresponding singular integral operator $T_{-1}$ as
		\[T_{-1}f=\sum\limits_{i\in\Z}K_0^i*f=S*f.\]
		We first show that the kernel $S$ satisfies the growth and H\"{o}rmander conditions:
		\[\abs{S(x)}\lesssim\frac{\norm{\Omega}_1}{\abs{x}^d},\;\;\abs{S(x)-S(y)}\lesssim\norm{\Omega}_1\frac{\abs{y}}{\abs{x}^{d+1}}\;\text{for}\;2\abs{y}<\abs{x}.\]
		Indeed we have,
		\begin{align*}
			\abs{K_0^i(x)}&=\abs[\Big]{\int\abs{y}^{-d}\Omega(\frac{y}{\abs{y}})\beta_i(y)\phi_{-i}(x-y)\;dy}\\
			&\lesssim\chi_{2^{i-2}\leq\abs{x}\leq2^{i+2}}(x)\frac{1}{\abs{x}^d}\int\limits_{2^{i-1}\leq\abs{y}\leq2^{i+1}}\abs{\Omega(\frac{y}{\abs{y}})\phi(2^{-i}(x-y))}2^{-id}\;dy\\
			&\lesssim\chi_{2^{i-2}\leq\abs{x}\leq2^{i+2}}(x)\frac{1}{\abs{x}^d}\int_{\Sp^{d-1}}\abs{\Omega(\theta)}\int\limits_{r=0}^{2}\abs{\phi(2^{-i}x-r\theta)}r^{d-1}\;dr\;d\theta\\
			&\lesssim\frac{\norm{\Omega}_1}{\abs{x}^d}\chi_{2^{i-2}\leq\abs{x}\leq2^{i+2}}(x).
		\end{align*}
		For H\"{o}rmander's condition, we observe that
		\begin{align*}
			\abs{\nabla K_0^i(x)}&\leq\int\abs{y}^{-d}\abs[\Big]{\Omega(\frac{y}{\abs{y}})\beta_i(y)\nabla\phi_{-i}(x-y)}\;dy\\
			&\lesssim\chi_{2^{i-2}\leq\abs{x}\leq2^{i+2}}(x)\frac{1}{\abs{x}^d}\int\limits_{2^{i-1}\leq\abs{y}\leq2^{i+1}}\abs{\Omega(\frac{y}{\abs{y}})\nabla\phi(2^{-i}(x-y))}2^{-i(d+1)}\;dy\\
			&\lesssim\frac{\norm{\Omega}_1}{\abs{x}^{d+1}}\chi_{2^{i-2}\leq\abs{x}\leq2^{i+2}}(x).
		\end{align*}
		Thus we get $\abs{\nabla S(x)}\leq\sum\limits_{i\in\Z}\abs{\nabla K_0^i(x)}\lesssim\frac{\norm{\Omega}_1}{\abs{x}^{d+1}}$.
		Therefore by mean value theorem, we have $\abs{S(x)-S(y)}\lesssim\norm{\Omega}_1\frac{\abs{y}}{\abs{x}^{d+1}}\;\text{for}\;2\abs{y}<\abs{x}$.\\
		We also note that,
		\[\abs{\widehat{S}(\xi)}\leq\sum\limits_{i\in\Z}\abs{\widehat{K^i}(\xi)\widehat{\phi_{-i}}(\xi)}\lesssim\sum\limits_{i:\abs{2^i\xi}\leq 1}\abs{2^i\xi}^{\alpha}+\sum\limits_{i:\abs{2^i\xi}>1}\abs{2^i\xi}^{-\alpha}\lesssim 1.\]
		Hence $T_{-1}$ is a $L^2$ bounded Calder\'{o}n-Zygmund operator. And by standard Calder\'{o}n-Zygmund theory \cite{G}, the maximally truncated singular integral operator $T_{-1}^{\sharp}$ is weak (1,1), where $T_{-1}^{\sharp} f=\sup\limits_{\epsilon>0}\;\abs{S_{\epsilon}*f}$ and $S_\epsilon=S\chi_{\abs{.}>\epsilon}$.
		\begin{align}
			\text{Now }T_{-1}^*f(x)&\leq\sup\limits_{k\in\Z}\;\Bigl(\abs[\Big]{\int\limits_{\abs{x-y}>2^{k+1}}\sum\limits_{i>k}K_0^i(x-y)f(y)\;dy}+\abs[\Big]{\int\limits_{\abs{x-y}\leq2^{k+1}}\sum\limits_{i>k}K_0^i(x-y)f(y)\;dy}\Bigr)\nonumber\\
			&\leq\sup\limits_{k\in\Z}\;\Bigl(\abs[\Big]{\int\limits_{\abs{x-y}>2^{k+1}}\sum\limits_{i>k}K_0^i(x-y)f(y)\;dy}+\abs[\Big]{\int\limits_{\abs{x-y}\leq2^{k+1}}K_0^{k+1}(x-y)f(y)\;dy}\Bigr)\nonumber\\
			&\lesssim T_{-1}^{\sharp}f+Mf\label{CZ+M}.
		\end{align}
		Hence $T_{-1}^*$ is also $L^2$-bounded and weak type (1,1).
	\end{proof}
	Now we provide an $L^2$ estimate for the operator $T_m^*$. The following lemma is a $L\log L$ counterpart for Lemma 8 in \cite{H}. The estimate was used in \cite{DHL} to provide a sparse domination for $T_{\Omega}^*$. The proof follows in verbatim. 
	\begin{lemma}\label{l2}
		There exists constant $c_2>0$ such that,
		\[\norm{T_m^*h}_2\lesssim 2^{-c_2 2^m}\norm{h}_2,\;m\in\N.\]
	\end{lemma}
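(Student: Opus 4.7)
The plan is to follow the Fourier-analytic argument of \cite[Lemma 8]{H} with only cosmetic changes; the sole input requiring modification is (\ref{Fourierestimate}), which (as already noted by the authors) holds in the $L\log L$ regime rather than only for $\Omega\in L^\infty$. Setting $L_m^i := K_m^i - K_{m-1}^i = K^i \ast \psi_m^i$ with $\psi_m^i := \phi_{2^m - i} - \phi_{2^{m-1}-i}$ for $m\ge 2$ (and $\psi_1^i := \phi_{2-i}-\phi_{-i}$), the key pointwise input is
\[
\bigl|\widehat{\psi_m^i}(\xi)\bigr|\lesssim \min\bigl\{|2^{i-2^{m-1}}\xi|,\,|2^{i-2^m}\xi|^{-N}\bigr\},
\]
valid for every $N\in\N$ since $\widehat{\phi}$ is Schwartz with $\widehat{\phi}(0)=1$. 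Combining this with (\ref{Fourierestimate}) in the three regimes $|2^i\xi|\le 1$, $|2^i\xi|\in[1,2^{2^m}]$, $|2^i\xi|\ge 2^{2^m}$ yields the uniform bound
\[
\bigl|\widehat{L_m^i}(\xi)\bigr|\lesssim 2^{-a\cdot 2^{m-1}},
\]
where $a\in(0,1)$ is the exponent in (\ref{Fourierestimate}).

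For each fixed $\xi\neq 0$ the factor $\widehat{\psi_m^i}(\xi)$ is essentially supported in the window $|2^i\xi|\in[2^{2^{m-1}},2^{2^m}]$, which corresponds to $O(2^m)$ consecutive indices $i$. Summing the pointwise bound gives $\sum_i|\widehat{L_m^i}(\xi)|\lesssim 2^m\cdot 2^{-a\cdot 2^{m-1}}\lesssim 2^{-c\cdot 2^m}$ for any fixed $c<a/2$, and the same bound holds for any tail $\sum_{i>k}$. By Plancherel one thus obtains the uniform-in-$k$ $L^2$ estimate
\[
\Bigl\|\sum_{i>k}L_m^i\ast h\Bigr\|_2\lesssim 2^{-c\cdot 2^m}\|h\|_2.
\]

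To promote this to an $L^2$ bound on the supremum, I would mimic the proof of \Cref{T_{-1}}: write
\[
\sup_k\Bigl|\sum_{i>k}L_m^i\ast h\Bigr|\le\Bigl|\sum_i L_m^i\ast h\Bigr|+\sup_k\Bigl|\sum_{i\le k}L_m^i\ast h\Bigr|,
\]
treat the first summand by the bound just established, and handle the second via Cotlar's inequality applied to the fully summed kernel $S_m:=\sum_i L_m^i$. The kernel $S_m$ satisfies pointwise size and Hörmander conditions inherited from $K^i$, but with every constant carrying the extra factor $2^{-c\cdot 2^m}$ coming from the cancellation between $\phi_{2^m-i}$ and $\phi_{2^{m-1}-i}$; Cotlar's inequality then dominates the maximal truncation by $M_{\Omega}h+M(S_m\ast h)$, each scaled by $2^{-c\cdot 2^m}$.

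The main obstacle will be the bookkeeping required to keep the exponential gain $2^{-c\cdot 2^m}$ alive throughout this Cotlar bootstrap: a naïve pointwise majorisation of tails would discard the gain and merely reproduce the (already known) unconditional $L^2$ boundedness of $T_m^*$. Preserving the gain comes down to verifying that the Calderón--Zygmund constants of $S_m$ themselves scale as $2^{-c\cdot 2^m}$, which on the spatial side follows from a direct moment calculation using $\int\psi_m^i = 0$ and the smoothness of $\phi$.
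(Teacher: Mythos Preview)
Your Fourier-side analysis is correct: the multiplier bound $|\widehat{L_m^i}(\xi)|\lesssim 2^{-a\,2^{m-1}}$ and the resulting uniform-in-$k$ estimate $\|\sum_{i>k}L_m^i*h\|_2\lesssim 2^{-c\,2^m}\|h\|_2$ are exactly what one needs. The gap is in your passage to the supremum.

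Your claim that the Calder\'on--Zygmund constants of $S_m=\sum_i L_m^i$ scale like $2^{-c\,2^m}$ is false, and this is precisely where the roughness of $\Omega$ bites. The moment identity $\int\psi_m^i=0$ would produce a spatial gain only if the \emph{other} convolutand $K^i$ were smooth at scale $2^{i-2^{m-1}}$; but $K^i(x)=|x|^{-d}\Omega(x/|x|)\beta_i(x)$ has no angular regularity whatsoever when $\Omega\in L\log L$ (or even $L^\infty$). Concretely,
\[
L_m^i(x)=\int\bigl[K^i(x-y)-K^i(x)\bigr]\psi_m^i(y)\,dy
\]
does not improve on $\|K^i\|_\infty\|\psi_m^i\|_1$, since $|K^i(x-y)-K^i(x)|$ is governed by $|\Omega((x-y)/|x-y|)-\Omega(x/|x|)|$, which carries no smallness. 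A direct computation in fact shows $\|\nabla L_m^i\|_1\lesssim 2^{2^m-i}\|\Omega\|_1$, so the H\"ormander constant of $S_m$ is of order $2^{2^m}$, not $2^{-c\,2^m}$. Consequently the Cotlar bootstrap yields $T_m^*h\lesssim M(S_m*h)+C_m\,Mh$ with $C_m$ bounded below, and the $Mh$ term destroys the exponential gain --- exactly the ``na\"ive pointwise majorisation'' you yourself flagged as the danger.

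The paper does not supply its own argument; it invokes \cite[Lemma~8]{H} (see also \cite{DHL}) verbatim, noting only that the Fourier input \eqref{Fourierestimate} survives under $\Omega\in L\log L$. The proof there avoids spatial Calder\'on--Zygmund estimates altogether: one introduces a Littlewood--Paley decomposition $h=\sum_l P_l h$, exploits that $\widehat{L_m^i}\widehat{P_{i-j}}$ have pairwise disjoint frequency supports as $i$ varies (so partial sums $\sum_{i>k}L_m^i*P_{i-j}h$ are recovered by smooth frequency cut-offs of the full sum, hence dominated by the maximal function), and then sums in $j$ using the decay of $|\widehat{L_m^i}(\xi)\widehat{P_{i-j}}(\xi)|$ away from $j\in[2^{m-1},2^m]$. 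The gain $2^{-c\,2^m}$ survives because it lives entirely on the Fourier side.
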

	We now complete the estimates for $\mathcal{H}_{k,j},\;j=2,3$. Using \Cref{l2} and choosing $c_1$ such that $c_1<c_2$, we have
	\begin{align*}
		\norm{\sup\limits_{k\in\Z}\abs{\mathcal{H}_{k,2}}}_2&\lesssim\sum\limits_{n\geq 1}\norm[\Big]{\sup\limits_{k}\;\abs[\Big]{\sum\limits_{i>k}(K^i-K_n^i)*f^n}}_2\\
		&=\sum\limits_{n\geq 1}\norm[\Big]{\sup\limits_{k}\;\abs[\Big]{\sum\limits_{i>k}\sum\limits_{m=n+1}^{\infty}(K_m^i-K_{m-1}^i)*f^n}}_2\\
		&\leq\sum\limits_{n\geq 1}\sum\limits_{m=n+1}^{\infty}\norm{T_m^*(f^n)}_2\\
		&\lesssim\sum\limits_{n\geq 1}\sum\limits_{m=n+1}^{\infty}2^{-c_2 2^m}\norm{f^n}_2\\
		&\lesssim\sum\limits_{n\geq 1}2^{-c_2 2^n}(2^{c_1 2^n}\alpha\norm{f}_1)^{\frac{1}{2}}\\
		&\lesssim (\alpha\norm{f}_1)^{\frac{1}{2}}.
	\end{align*}
	For j=3, we write $K_n^i-K_0^i=\sum\limits_{m=1}^{n}(K_m^i-K_{m-1}^i)$ and use \Cref{l2} to get
	\begin{align*}
		\norm{\sup\limits_{k\in\Z}\abs{\mathcal{H}_{k,3}}}_2&\lesssim\norm[\Big]{\sup\limits_{k}\;\abs[\Big]{\sum\limits_{i>k}\sum\limits_{n=1}^{\infty}\sum\limits_{m=1}^{n}(K_m^i-K_{m-1}^i)*g^n}}_2\\
		&\leq\sum\limits_{m=1}^{\infty}\norm[\Big]{T_m^*(\sum\limits_{n=m}^{\infty}g^n)}_2\\
		&\lesssim\sum\limits_{m=1}^{\infty}2^{-c_2 2^m}\norm[\Big]{\sum\limits_{n=m}^{\infty}g^n}_2\\
		&\lesssim (\alpha\norm{f}_1)^{\frac{1}{2}}.
	\end{align*}
	The estimate for $\abs{\{x\in E^c:\sup\limits_k\;\abs{\mathcal{H}_{k,4}(x)}>\alpha/5\}}$ follows from weak $(1,1)$ boundedness of $T_{-1}^*$ (\Cref{T_{-1}}) and $\norm{\sum\limits_{n\in\N}f^n}_1\lesssim\norm{f}_1$.
	\subsection{Estimates of Seeger.}
	Before estimating the bad part $\mathcal{H}_{k,b}$, we state the estimates in \cite{S1}, that were vital in proving weak type endpoint boundedness of  $T_{\Omega}$.\\
	Let $\{H^i\}_{i\in\Z}$ be a sequence of kernels satisfying,
	\begin{equation}\label{S1}
		{\rm supp}\;(H^i)\subset\{2^{i-2}\leq\abs{x}\leq 2^{i+2}\},
	\end{equation}
	And for each $N\in\N$,
	\begin{equation}\label{S2}
		\sup\limits_{0\leq l\leq N}\sup\limits_{i\in\Z}r^{n+l}\abs[\Big]{\frac{\partial^l}{\partial r^l}H^i(r\theta)}\lesssim C_N,
	\end{equation}
	uniformly in $\theta\in \Sp^{d-1}$ and $r\in\R^+$.\\
	Using the decomposition of $H^i$ in Section 2 of \cite{S1}, we write
	\[H^i=\Gamma_s^i+(H^i-\Gamma_s^i),\]
	and we have the following decay estimates.
	\begin{lemma}[\cite{S1}]\label{Seeger}
		Let $I\subseteq\Z$ be an index set and $\mathcal{Q}$ be a collection of disjoint dyadic cubes. Let $F_s=\sum\limits_{Q\in\mathcal{Q}:l(Q)=2^s}b_{_Q}$, where $b_{_Q}$ is supported in $Q$ and $\norm{b_{_Q}}_1\lesssim \alpha\abs{Q}$. Then there exists $\delta_1>0$ such that for $s>3$, we have
		\begin{equation}\label{Seeger1}
			\norm[\Big]{\sum\limits_{i\in I}\Gamma_s^i*F_{i-s}}_{2}\lesssim C_0 2^{-\delta_1s}\alpha^{\frac{1}{2}}\Bigl(\sum\limits_{Q\in\mathcal{Q}}\norm{b_{_Q}}_{1}\Bigr)^{\frac{1}{2}}.
		\end{equation}
		If we also assume $\int b_{_Q}=0$, then
		\begin{equation}\label{Seeger2}
			\norm{(H^i-\Gamma_s^i)*b_{_Q}}_1\lesssim (C_0+C_{5d})2^{-\delta_1 s}\alpha\norm{b_{_Q}}_1.
		\end{equation}
	\end{lemma}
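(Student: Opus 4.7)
The statement is quoted verbatim from Seeger \cite{S1}, so my plan is to verify that the hypotheses \eqref{S1}--\eqref{S2} on $\{H^i\}$ place them squarely inside Seeger's framework and then invoke his original argument. It is nevertheless worth outlining the three ingredients that go into his proof, because they explain the shape of each right-hand side: an angular Littlewood--Paley decomposition on $\Sp^{d-1}$, a directional mollification producing the kernels $\Gamma_s^i$, and a $TT^*$ almost-orthogonality computation in $L^2$.

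First, I would introduce the angular mollification defining $\Gamma_s^i$: fix a maximal $2^{-s}$-separated set of directions $\{\theta_\nu\}\subset\Sp^{d-1}$ with a subordinate smooth partition of unity $\{\eta_{\nu,s}\}$, decompose $H^i = \sum_\nu H^i_\nu$ with $H^i_\nu(x) = H^i(x)\eta_{\nu,s}(x/\abs{x})$, and set $\Gamma_s^i := \sum_\nu H^i_\nu \ast \rho_{i,\nu}$, where $\rho_{i,\nu}$ is a smooth unit-mass bump supported in a plate of radial length $\sim 2^i$ and transverse width $\sim 2^{i-s}$ aligned with $\theta_\nu$. The size and radial-smoothness conditions \eqref{S1}--\eqref{S2} are exactly what is needed to control both $\Gamma_s^i$ and the error kernel $H^i-\Gamma_s^i$.

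For \eqref{Seeger2}, I would write $H^i - \Gamma_s^i = \sum_\nu(H^i_\nu - H^i_\nu\ast\rho_{i,\nu})$ and exploit the cancellation $\int b_{_Q} = 0$ by subtracting $(H^i-\Gamma_s^i)$ evaluated at the centre of $Q$ before integrating against $b_{_Q}$; the match $l(Q) = 2^{i-s}$ against the transverse width of $\rho_{i,\nu}$, combined with a Taylor expansion, yields the decay $2^{-\delta_1 s}$ after summing over $\nu$ (the loss of $O(2^{s(d-1)})$ directions is beaten by the mollification smoothness on the same scale). For \eqref{Seeger1}, I would run a Cotlar/$TT^*$ almost-orthogonality argument on the map $\{b_{_Q}\}\mapsto \sum_i \Gamma_s^i \ast F_{i-s}$, with two ingredients: a Plancherel estimate pinning $\widehat{\Gamma_s^i}$ to the annulus $\abs{\xi}\sim 2^{-i}$ of angular aperture $\sim 2^{-s}$ with Fourier-side decay $2^{-\delta_1 s}$, and a frequency-support mismatch that supplies decay between scales $i\ne j$. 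The factor $\alpha^{1/2}\bigl(\sum_Q\norm{b_{_Q}}_1\bigr)^{1/2}$ then emerges from interpolating $\norm{F_{i-s}}_2^2 \lesssim \alpha\norm{F_{i-s}}_1$, using $\norm{b_{_Q}}_1 \lesssim \alpha\abs{Q}$ and the disjoint supports of the $b_{_Q}$ at each fixed scale.

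The main obstacle, were one rebuilding rather than citing, is the microlocal Fourier calculus underlying \eqref{Seeger1}, which is the genuinely delicate part of Seeger's original proof. Since our kernels meet \eqref{S1}--\eqref{S2}, however, Seeger's construction transfers without change and the lemma follows by direct appeal to \cite{S1}.
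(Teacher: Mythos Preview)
Your proposal is correct and matches the paper's own treatment: the paper simply refers to Lemmas~2.1 and~2.2 of \cite{S1} and remarks that those proofs go through for an arbitrary index set $I\subset\Z$. Your outline of Seeger's angular mollification, cancellation argument for \eqref{Seeger2}, and $TT^*$ scheme for \eqref{Seeger1} is accurate and more detailed than what the paper itself provides; the only point you might make explicit, as the paper does, is that restricting $i$ to a subset $I$ does not affect any step of Seeger's argument.
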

	For the proof of \Cref{Seeger}, we refer to the proofs of Lemma 2.1 and Lemma 2.2 of \cite{S1}. We note that the aforementioned proofs also work for any index $I\subset\Z$.

	\subsection{Estimate for Bad part}\label{bad}
	We now diverge from the proof of \cite{STW} in the sense that instead of conducting our analysis on the kernel side, we focus on the scales $s$ of the bad part. In fact, we have a much stronger decay in $s$ for the rough singular integrals than the more general Radon transforms  considered in \cite{STW}. 

	We begin by collecting cubes of same scales, namely  \[B_s^n=\sum\limits_{Q\in\mathcal{F}:l(Q)=2^s}b_{_Q}^n.\] Hence  $b^n=\sum\limits_{s\in\Z}B_s^n.$  
	Since ${\rm supp}(K_n^i)\subset \{2^{i-2}\leq\abs{x}\leq 2^{i+2}\},\forall n\geq 0$, we have
	\begin{equation}\label{support}
		\sum\limits_{i>k}\sum\limits_{n\in\N}\sum\limits_{s<3}(K_n^i-K_0^i)*B_{i-s}^n(x)=0,\;\text{for }x\in E^c.
	\end{equation}
	Therefore it remains to estimate $\abs[\Big]{\set[\Big]{x\in\R^d:\sup\limits_k\;\abs[\Big]{\sum\limits_{i>k}\sum\limits_{n\in\N}\sum\limits_{s\geq 3}(K_n^i-K_0^i)*B_{i-s}^n(x)}>\frac{\alpha}{5}}}$.\\
	
	Let $C>0$ be a constant to be chosen later.  By Chebyshev's inequality and $\norm{K_n^i}_1\lesssim\norm{\Omega}_{L^1(\Sp^{d-1})}$, we have
	\begin{align*}
		&\abs[\Big]{\set[\Big]{x\in\R^d:\sup\limits_k\;\abs[\Big]{\sum\limits_{i>k}\sum\limits_{n\in\N}\sum\limits_{s=3}^{Cn}(K_n^i-K_0^i)*B_{i-s}^n(x)}>\frac{\alpha}{10}}}\\
		&\lesssim\frac{1}{\alpha}\sum\limits_{i\in\Z}\sum\limits_{n\in\N}\sum\limits_{s=3}^{Cn}(\norm{K_n^i}_1+\norm{K_0^i}_1)\norm{B_{i-s}^n}_1\\
		&\lesssim\frac{1}{\alpha}\sum\limits_{n\in\N}\sum\limits_{s=3}^{Cn}\sum\limits_{i\in\Z}\sum\limits_{Q:l(Q)=2^{i-s}}\norm{b_{_Q}^n}_1\\
		&\lesssim\frac{1}{\alpha}\sum\limits_{n\in\N}n\sum\limits_{Q\in\mathcal{F}}\norm{f_{_Q}^n}_1\\
		&\lesssim\frac{1}{\alpha}\sum\limits_{n\in\N}\sum\limits_{Q\in\mathcal{F}}\int\abs{f_{_Q}^n}\log\log\Biggl(e^2+\frac{\abs{f_{_Q}^n}}{\alpha}\Biggr)\\
		&\lesssim\frac{1}{\alpha}\sum\limits_{Q\in\mathcal{F}}\int\abs{f_{_Q}}\log\log\Biggl(e^2+\frac{\abs{f_{_Q}}}{\alpha}\Biggr)\\
		&\lesssim\frac{1}{\alpha}\int \abs{f(x)}\log\log\left(e^2+\frac{\abs{f(x)}}{\alpha}\right)\;dx.
	\end{align*}
	To deal with $\Omega\in L\log L(\Sp^{d-1})$, we decompose the kernel into bounded and integrable parts. In this regard, we define the set $\mathcal{D}_s=\{\theta\in \Sp^{d-1}:\abs{\Omega(\theta)}\leq2^{\delta s}\norm{\Omega}_{L^1(\Sp^{d-1})}\}$, where the constant $\delta>0$ is to be chosen later. We write \[\Omega=\Omega\chi_{\mathcal{D}_s^c}+\Omega\chi_{\mathcal{D}_s}=\Omega^++\Omega^-\]
	And thus $K_n^i=K_n^{i+}+K_n^{i-}$, where $K_n^{i\pm}=K^{i\pm}*\phi_{2^n-i}$ and $K^{i\pm}(x)=\abs{x}^{-d}\Omega^{\pm}(\frac{x}{\abs{x}})\beta_i(x)$.
	Therefore we have,
	\begin{align*}
		&\abs[\Big]{\set[\Big]{x\in\R^d:\sup\limits_k\;\abs[\Big]{\sum\limits_{i>k}\sum\limits_{n\in\N}\sum\limits_{s=Cn}^{\infty}(K_n^{i}-K_0^{i})*B_{i-s}^n(x)}>\frac{\alpha}{10}}}\\
		\leq&\abs[\Big]{\set[\Big]{x\in\R^d:\sup\limits_k\;\abs[\Big]{\sum\limits_{i>k}\sum\limits_{n\in\N}\sum\limits_{s=Cn}^{\infty}(K_n^{i+}-K_0^{i+})*B_{i-s}^n(x)}>\frac{\alpha}{20}}}\\
		&+\abs[\Big]{\set[\Big]{x\in\R^d:\sup\limits_k\;\abs[\Big]{\sum\limits_{i>k}\sum\limits_{n\in\N}\sum\limits_{s=Cn}^{\infty}(K_n^{i-}-K_0^{i-})*B_{i-s}^n(x)}>\frac{\alpha}{20}}}\\
		=&\mathcal{B}^++\mathcal{B}^-
	\end{align*}
	To bound $\mathcal{B}^+$, by Chebyshev's inequlity we have
	\begin{eqnarray*}
		\mathcal{B}^+&\lesssim&\frac{1}{\alpha}\sum\limits_{i\in\Z}\sum\limits_{n\in\N}\sum\limits_{s=Cn}^{\infty}\norm{(\abs{K_n^{i+}}+\abs{K_0^{i+}})*\abs{B_{i-s}^n}}_1\\
		&\lesssim&\frac{1}{\alpha}\sum\limits_{n\in\N}\sum\limits_{s=3}^{\infty}\sum\limits_{i\in\Z}\norm{B_{i-s}^n}_1\norm{K^{i+}}_1\\
		&\lesssim&\frac{1}{\alpha}\sum\limits_{n\in\N}\sum\limits_{Q\in\mathcal{F}}\norm{b_{_Q}^n}_1\sum\limits_{s=3}^{\infty}\int\limits_{\{\abs{\Omega(\theta)}>2^{\delta s}\norm{\Omega}_{L^1(\Sp^{d-1})}\}}\abs{\Omega(\theta)}\;d\theta\\
		&\lesssim&\frac{1}{\alpha}\norm{f}_1\int\limits_{\Sp^{d-1}}\abs{\Omega(\theta)}\;\text{card}\set[\Big]{s\in\N:2^{\delta s}<\frac{\abs{\Omega(\theta)}}{\norm{\Omega}_1}}\;d\theta\\
		&\lesssim&\frac{1}{\alpha}\norm{f}_1\norm{\Omega}_{L\log L(\Sp^{d-1})}.
	\end{eqnarray*}
	To estimate $\mathcal{B}^-$, we expand the kernel telescopically as
	\begin{equation}
		K_n^{i-}-K_0^{i-}=\sum\limits_{m=1}^n(K_m^{i-}-K_{m-1}^{i-})=\sum\limits_{m=1}^nH_m^i.
		\label{hmi}
	\end{equation}
	By a change of variable and differentiating in radial variable one can verify that the kernel $H_m^i$ satisfies the estimates (\ref{S1}) and (\ref{S2}) with $C_N=2^{\delta s}$ (or see \cite{H}).
	Now, to deal with the maximal operator in $\mathcal{B}^-$ we need a Cotlar type inequality that was first proved by Honz\'{i}k \cite{H}. But in contrast with \cite{H}, we group the indices $i$ with respect to the size of the function. We provide a proof for the convenience of the reader. We need the following  $L^1$ estimate which follows  from the support condition of $\phi$.
	\begin{lemma}\label{phi}
		Let $a_1,a_2,a_3\in\Z$ with $a_1<a_2<a_3$. Then there exists $\delta_2>0$ such that
		\begin{align*}
			\int\abs{\phi_{a_2}*\phi_{a_1}-\phi_{a_1}}\lesssim 2^{\delta_2(a_1-a_2)},\hspace{1cm}&{\rm supp}(\phi_{a_2}*\phi_{a_1}-\phi_{a_1})\subseteq B(0,2^{-da_1})\text{   and}\\
			\int\abs{\phi_{a_1}*(\phi_{a_3}-\phi_{a_2})}\lesssim 2^{\delta_2(a_1-a_2)},\hspace{1cm}&{\rm supp}(\phi_{a_1}*(\phi_{a_3}-\phi_{a_2}))\subseteq B(0,2^{-da_1}).
		\end{align*}
	\end{lemma}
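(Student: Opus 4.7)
The plan is to exploit the cancellation coming from $\int\phi_{a_j}=1$ together with the Lipschitz regularity and compact support of $\phi$. The support statements follow immediately from $\mathrm{supp}(\phi_{a_j})\subseteq B(0,2^{-a_j-1})$ combined with $\mathrm{supp}(f*g)\subseteq\mathrm{supp}(f)+\mathrm{supp}(g)$: since $a_1<a_2\leq a_3$, each of $\phi_{a_2}*\phi_{a_1}$, $\phi_{a_1}$, and $\phi_{a_1}*(\phi_{a_3}-\phi_{a_2})$ is supported in a ball of radius $\lesssim 2^{-a_1}$, which is what is being recorded on the right-hand sides.

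For the first $L^1$ bound, I would use $\int\phi_{a_2}=1$ to rewrite
\[
(\phi_{a_2}*\phi_{a_1})(x)-\phi_{a_1}(x)=\int\phi_{a_2}(y)\bigl[\phi_{a_1}(x-y)-\phi_{a_1}(x)\bigr]\,dy,
\]
and then apply Fubini (bounding $\phi_{a_2}$ by $|\phi_{a_2}|$, since $\phi$ need not be nonnegative) to reduce matters to estimating $\|\phi_{a_1}(\cdot-y)-\phi_{a_1}\|_{L^1}$. Scaling via $z=2^{a_1}x$ turns this into $\|\phi(\cdot-2^{a_1}y)-\phi\|_{L^1}$, which by the mean value theorem and the compact support of $\phi$ is bounded by $C\,2^{a_1}|y|$ whenever $2^{a_1}|y|\lesssim 1$; on the support of $\phi_{a_2}$ one has $|y|\lesssim 2^{-a_2}$, so this regime is automatic. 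A first-moment computation after the rescaling $u=2^{a_2}y$ gives $\int|\phi_{a_2}(y)|\,|y|\,dy\lesssim 2^{-a_2}$, and combining these yields $\|\phi_{a_2}*\phi_{a_1}-\phi_{a_1}\|_{L^1}\lesssim 2^{a_1-a_2}$. Thus $\delta_2=1$ suffices (any value in $(0,1]$ is fine for the applications).

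For the second $L^1$ estimate, rather than redo the computation I would use the commutativity of convolution and the linearity of the expression to write
\[
\phi_{a_1}*(\phi_{a_3}-\phi_{a_2})=\bigl(\phi_{a_3}*\phi_{a_1}-\phi_{a_1}\bigr)-\bigl(\phi_{a_2}*\phi_{a_1}-\phi_{a_1}\bigr),
\]
and then invoke the first estimate twice (once with the pair $(a_1,a_3)$ and once with $(a_1,a_2)$) to get $\|\phi_{a_1}*(\phi_{a_3}-\phi_{a_2})\|_{L^1}\lesssim 2^{a_1-a_3}+2^{a_1-a_2}\lesssim 2^{a_1-a_2}$, since $a_3>a_2$. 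There is no substantive obstacle in this lemma; the only bookkeeping point is making sure to estimate $|\phi_{a_j}|$ in the triangle inequality since $\phi$ is not required to be nonnegative, and making sure that the small-translation regime $2^{a_1}|y|\lesssim 1$ is used — but this is guaranteed by the hypothesis $a_1<a_2$.
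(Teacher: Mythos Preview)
Your argument is correct and is the standard route: use $\int\phi_{a_2}=1$ to insert cancellation, scale, apply the mean value theorem, and then reduce the second inequality to the first by adding and subtracting $\phi_{a_1}$. The paper does not spell out a proof at all (it simply remarks that the lemma ``follows from the support condition of $\phi$''), so there is nothing further to compare; your computation yields $\delta_2=1$, which is of course sufficient, and your reading of the support claims as $B(0,c\,2^{-a_1})$ is the intended one.
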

	\begin{lemma}\label{Cotlar}
		There exists a sequence of non-negative functions $\{v_i\}_{i\in\Z}$ with $\sup\limits_{i\in\Z}\;\norm{v_i}_{1}\lesssim 1$ such that the following pointwise inequality holds:
		\begin{equation}\label{CI}
			\begin{aligned}
				\sup\limits_{k\in\Z}\;\abs[\Big]{\sum\limits_{i>k}H_{m}^i*B_{i-s}^n}\lesssim\sum\limits_{r=0}^{s2^{n+2}-1}&M\Bigl(\sum\limits_{i\equiv r(mod\; s2^{n+2})}H_{m}^i*B_{i-s}^n\Bigr)+ \\ &C_{n,s}\sum\limits_{i\in\Z}\sum\limits_{Q\in\mathcal{F}:l(Q)=2^{i-s}}v_i*\abs{b_{_Q}^n},
			\end{aligned}
		\end{equation}
		where $M$ is the Hardy-Littlewood maximal function, $C_{n,s}= s2^n2^{-(\delta_2-\delta)s}$ and $\delta_2$ is as in \Cref{phi}. 
	\end{lemma}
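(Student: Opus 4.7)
The plan is to establish the pointwise inequality \eqref{CI} by adapting the Cotlar-type argument in Honz\'{i}k's work to this setting. I first decompose the partial sum according to residue classes modulo $P := s2^{n+2}$:
\[
\sum_{i>k} H_m^i * B_{i-s}^n \;=\; \sum_{r=0}^{P-1} \sum_{\substack{i>k\\ i\equiv r\,(\mathrm{mod}\,P)}} H_m^i * B_{i-s}^n,
\]
so that it suffices to control each residue class separately. For fixed $r$ and $k$, let $i_0=i_0(k,r)$ be the smallest $i>k$ with $i\equiv r\pmod P$, and introduce the smooth cutoff $\Phi_k := \phi_{-(i_0-s)}$, whose scale is comparable to the side length of the cubes appearing at level $i_0-s$.

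Next, a Cotlar-type identity decomposes the residue-class partial sum at $x$ into three pieces:
\begin{align*}
\sum_{\substack{i>k\\ i\equiv r}} H_m^i * B_{i-s}^n(x)
&= \Phi_k * \Bigl(\sum_{i\equiv r} H_m^i * B_{i-s}^n\Bigr)(x) \\
&\quad - \Phi_k * \Bigl(\sum_{\substack{i\leq k\\ i\equiv r}} H_m^i * B_{i-s}^n\Bigr)(x) \\
&\quad + (\delta_0 - \Phi_k) * \Bigl(\sum_{\substack{i>k\\ i\equiv r}} H_m^i * B_{i-s}^n\Bigr)(x).
\end{align*}
The first term is pointwise bounded by $M\bigl(\sum_{i\equiv r} H_m^i * B_{i-s}^n\bigr)(x)$, which, summed over the $P$ residue classes, matches the first sum on the right-hand side of \eqref{CI}. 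The remaining two pieces form an error that must be estimated kernel-wise.

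For the error, rewrite $H_m^i = K^{i-} * (\phi_{2^m-i} - \phi_{2^{m-1}-i})$ and push the convolution with $\Phi_k$ (respectively $\delta_0-\Phi_k$) through $K^{i-}$. Applying \Cref{phi} to the relevant triples of scales, one obtains an $L^1$ bound of order $2^{-\delta_2 s}$ for
\[
\Phi_k * (\phi_{2^m-i}-\phi_{2^{m-1}-i}) \quad \text{and} \quad (\delta_0-\Phi_k) * (\phi_{2^m-i}-\phi_{2^{m-1}-i}).
\]
The choice of period $P=s2^{n+2}$ is precisely what ensures that every $i\neq i_0$ in the same residue class is separated from $i_0$ by at least $P$, guaranteeing the scale gap required by \Cref{phi} uniformly in $m\leq n$. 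Combined with the pointwise bound $|\Omega^{-}(\theta)|\leq 2^{\delta s}\|\Omega\|_{1}$, which allows the kernel estimate to absorb a factor $2^{\delta s}$, one arrives at a pointwise domination of the error by
\[
2^{-(\delta_2-\delta)s}\sum_{i\equiv r}\sum_{\substack{Q\in\mathcal{F}\\ l(Q)=2^{i-s}}} v_i * |b_Q^n|(x),
\]
for nonnegative functions $v_i$ built from rescaled convolutions of $\phi$ against $|\Omega|/\|\Omega\|_{1}$ and satisfying $\sup_i\|v_i\|_1\lesssim 1$. Summing over the $P=s2^{n+2}$ residue classes produces the prefactor $s2^n$, giving $C_{n,s}=s2^n\cdot 2^{-(\delta_2-\delta)s}$ as claimed.

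The main obstacle is the error estimate: the smoothing scale $2^{i_0-s}$ and the period $P$ must be calibrated so that \Cref{phi} delivers the sharp decay $2^{-\delta_2 s}$ for every pair $(i,m)$ arising in the sum, while preserving a clean form $v_i * |b_Q^n|$ with uniform $L^1$ control. The exponential gap between the annular scale $2^i$ of $H_m^i$ and its internal smoothing scale $2^{i-2^m}$ is what forces $P$ to grow like $2^n$, and is ultimately responsible for the factor $2^n$ in $C_{n,s}$.
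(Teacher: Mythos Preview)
Your overall architecture (split into residue classes modulo $P=s2^{n+2}$, then a three-term Cotlar splitting with a mollifier, first term controlled by $M$, the other two estimated kernel-wise via \Cref{phi}) matches the paper's. The gap is in the choice of mollifier scale.

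You take $\Phi_k=\phi_{-(i_0-s)}$, which lives at physical scale $2^{i_0-s}$. But the internal smoothing of $H_m^{i}=K^{i-}*(\phi_{2^m-i}-\phi_{2^{m-1}-i})$ sits at scale $2^{i-2^m}$. For the term $i=i_0$ in your third piece $(\delta_0-\Phi_k)*H_m^{i_0}*B_{i_0-s}^n$, \Cref{phi} gives at best
\[
\bigl\|\phi_{s-i_0}*\phi_{2^m-i_0}-\phi_{2^m-i_0}\bigr\|_1\lesssim 2^{-\delta_2(s-2^m)},
\]
and since $m$ ranges up to $n$ while nothing in the lemma forces $s\ge 2^n$, this bound is vacuous once $2^m\ge s$. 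The only crude pointwise estimate left for that single term is $\lesssim 2^{\delta s}\,v_{i_0}*|B_{i_0-s}^n|$, which after summing over the $P$ residue classes yields $s2^{n}2^{+\delta s}$ rather than $s2^{n}2^{-(\delta_2-\delta)s}$. Your remark that ``every $i\neq i_0$ is separated from $i_0$ by at least $P$'' is correct and handles all $i\neq i_0$ in both error pieces, but it says nothing about $i=i_0$ itself.

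The paper's fix is to place the mollifier \emph{midway} between consecutive residue-class scales: with $i_0=qs2^{n+2}+r$ it takes $\phi_{t_q}$ at index $t_q=-i_0+s2^{n+1}$, i.e.\ physical scale $2^{i_0-s2^{n+1}}$. Then for $i=i_0$ the gap in \Cref{phi} is $s2^{n+1}-2^m\ge s2^{n+1}-2^n\ge s$, uniformly in $1\le m\le n$, and one recovers $2^{-\delta_2 s}$ as claimed. The factor $2^{n+2}$ in $P$ is there precisely so that the mollifier can sit $s2^{n+1}$ below $i_0$ and still be $s2^{n+1}$ above the previous representative $i_0-P$; your scale $2^{i_0-s}$ does not exploit this. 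If you replace $\Phi_k$ by $\phi_{-(i_0-s2^{n+1})}$ your argument goes through verbatim and coincides with the paper's.
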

	\begin{proof}
		By Euclid's algorithm, we write $i=ps2^{n+2}+r,\;k=qs2^{n+2}+r'$ with $0\leq r,r'\leq s2^{n+2}-1$ and $i>k$ implies $p\geq q$. Hence we have
		\begin{align*}
			&\sup\limits_{k\in\Z}\;\abs[\Big]{\sum\limits_{i>k}H_{m}^i*B_{i-s}^n}\\
			&\leq\sup\limits_{q\in\Z}\sup\limits_{r'=0,1,\dots,s2^{n+2}-1}\;\abs[\Big]{\sum\limits_{\substack{p,r:\\ps2^{n+2}+r>qs2^{n+2}+r'}}\sum\limits_{\substack{Q\in\mathcal{F}:\\l(Q)=2^{ps2^{n+2}+r-s}}}H_m^{ps2^{n+2}+r}*b_{_Q}^n}\\
			&\leq\sum\limits_{r=0}^{s2^{n+2}-1}\sup\limits_{q\in\Z}\;\abs[\Big]{\sum\limits_{p\geq q}\sum\limits_{\substack{Q\in\mathcal{F}:\\l(Q)=2^{ps2^{n+2}+r-s}}}H_m^{ps2^{n+2}+r}*b_{_Q}^n}.
		\end{align*}
		Now we fix a $r$ and write
		\begin{align*}
			&\abs[\Big]{\sum\limits_{p\geq q}\sum\limits_{\substack{Q\in\mathcal{F}:\\l(Q)=2^{ps2^{n+2}+r-s}}}H_m^{ps2^{n+2}+r}*b_{_Q}^n}\\\leq&\abs[\Big]{\sum\limits_{p\geq q}\sum\limits_{\substack{Q\in\mathcal{F}:\\l(Q)=2^{ps2^{n+2}+r-s}}}(H_m^{ps2^{n+2}+r}-\phi_{t_q}*H_m^{ps2^{n+2}+r})*b_{_Q}^n}\\
			&+\abs[\Big]{\sum\limits_{p\in\Z}\sum\limits_{\substack{Q\in\mathcal{F}:\\l(Q)=2^{ps2^{n+2}+r-s}}}H_m^{ps2^{n+2}+r}*\phi_{t_q}*b_{_Q}^n}\\
			&+\abs[\Big]{\sum\limits_{p<q}\sum\limits_{\substack{Q\in\mathcal{F}:\\l(Q)=2^{ps2^{n+2}+r-s}}}H_m^{ps2^{n+2}+r}*\phi_{t_q}*b_{_Q}^n}\\
			:=&\mathcal{I}_{1,q}+\mathcal{I}_{2,q}+\mathcal{I}_{3,q},
		\end{align*}
		where $t_q=-qs2^{n+2}-r+s2^{n+1}$.
		\vskip 0.1cm
		Clearly $\mathcal{I}_{2,q}\lesssim M(\sum\limits_{p\in\Z}\sum\limits_{\substack{Q\in\mathcal{F}:\\l(Q)=2^{ps2^{n+2}+r-s}}}H_m^{ps2^{n+2}+r}*b_{_Q}^n)$.
		\vskip 0.1cm
		To estimate $\mathcal{I}_{1,q}$, we expand the kernel to have:
		\begin{align*}
			\abs{H_m^{ps2^{n+2}+r}&-\phi_{t_q}*H_m^{ps2^{n+2}+r}}\\
			&\leq\abs{(\phi_{2^{m}-ps2^{n+2}-r}-\phi_{t_q}*\phi_{2^{m}-ps2^{n+2}-r})*K^{(ps2^{n+2}+r)-}}\\
			&+\abs{(\phi_{2^{m-1}-ps2^{n+2}-r}-\phi_{t_q}*\phi_{2^{m-1}-ps2^{n+2}-r})*K^{(ps2^{n+2}+r)-}}.
		\end{align*}
		We estimate the first term, the second follows similarly. For $p\geq q$, we have $t_q> 2^{m}-ps2^{n+2}-r$ and \Cref{phi} implies
		\begin{align*}
			&\abs{(\phi_{2^{m}-ps2^{n+2}-r}-\phi_{t_q}*\phi_{2^{m}-ps2^{n+2}-r})*K^{(ps2^{n+2}+r)-}}\\
			&\lesssim 2^{\delta s}v_{ps2^{n+2}+r}\int\abs{\phi_{2^{m}-ps2^{n+2}-r}-\phi_{t_q}*\phi_{2^{m}-ps2^{n+2}-r}}\\
			&\lesssim 2^{\delta s} 2^{-\delta_2 s} 2^{-\delta_2(p-q)s2^{n+2}}v_{ps2^{n+2}+r},
		\end{align*}
		where $v_i:=2^{-in}\chi_{2^{i-2}\leq\abs{x}\leq 2^{i+2}}$. Therefore we have
		\begin{align*}
			\mathcal{I}_{1,q}&\lesssim 2^{\delta s} 2^{-\delta_2s}\sum\limits_{p\geq q}2^{-\delta_2(p-q)s2^{n+2}}\sum\limits_{\substack{Q\in\mathcal{F}\\l(Q)=2^{ps2^{n+2}+r-s}}}v_{ps2^{n+2}+r}*\abs{b_{_Q}^n}\\
			&\leq 2^{-(\delta_2-\delta)s}\sum\limits_{i\in\Z}\sum\limits_{\substack{Q\in\mathcal{F}\\l(Q)=2^{i-s}}}v_i*\abs{b_{_Q}^n}.
		\end{align*}
		Now we estimate $\mathcal{I}_{3,q}$. $p<q,\;t_q<2^{m-1}-ps2^{n+2}-r$ and \Cref{phi} implies
		\begin{align*}
			\abs{H_m^{ps2^{n+2}+r}*\phi_{t_q}}
			&\leq\abs{(\phi_{2^{m}-ps2^{n+2}-r}- \phi_{2^{m-1}-ps2^{n+2}-r})*\phi_{t_q}*K^{(ps2^{n+2}+r)-}}\\
			&\lesssim 2^{\delta s} v_{ps2^{n+2}+r}\int\abs{(\phi_{2^{m}-ps2^{n+2}-r}- \phi_{2^{m-1}-ps2^{n+2}-r})*\phi_{t_q}}\\
			&\lesssim 2^{\delta s}2^{-\delta_2s}2^{-\delta_2(q-p-1)s2^{n+2}}v_{ps2^{n+2}+r}.
		\end{align*}
		Hence $\mathcal{I}_{3,q}\leq  2^{-(\delta_2-\delta)s} \sum\limits_{i\in\Z}\sum\limits_{\substack{Q\in\mathcal{F}\\l(Q)=2^{i-s}}}v_i*\abs{b_{_Q}^n}.$
		\vskip 0.1cm
		Finally, taking supremum over $q\in\Z$ and summing in $r$ produces the desired inequality.\\
	\end{proof}
	Now we  conclude the proof of \Cref{Main}. Choose $\delta<\frac{1}{2}\min\{\delta_1,\delta_2\}$ and $C\delta>100$. By \Cref{Cotlar} we have,
	\begin{align*}
		\mathcal{B}^-\lesssim&\abs[\Big]{\set[\Big]{x\in \R^d:\sum\limits_{m=1}^{\infty}\sum\limits_{n=m}^{\infty}\sum\limits_{s=Cn}^{\infty}\sup\limits_k\;\abs[\Big]{\sum\limits_{i>k}H_m^i*B_{i-s}^n(x)}>\frac{\alpha}{20}}}\\
		\lesssim&\abs[\Big]{\set[\Big]{x\in \R^d:\sum\limits_{m=1}^{\infty}\sum\limits_{n=m}^{\infty}\sum\limits_{s=Cn}^{\infty}C_{n,s}\sum\limits_{i\in\Z}\sum\limits_{Q\in\mathcal{F}:l(Q)=2^{i-s}}v_i*\abs{b_{_Q}^n(x)}>\frac{\alpha}{40}}}\\
		&+\abs[\Big]{\set[\Big]{x\in \R^d:\sum\limits_{m=1}^{\infty}\sum\limits_{n=m}^{\infty}\sum\limits_{s=Cn}^{\infty}\sum\limits_{r=0}^{s2^{n+2}-1}M\Bigl(\sum\limits_{i\equiv r(mod\;s2^{n+2})}H_m^i*B_{i-s}^n\Bigr)(x)>\frac{\alpha}{40}}}\\
		=&\mathcal{B}_1^- +\mathcal{B}_2^-.
	\end{align*}
	By Chebyshev's inequality we have,
	\begin{align*}
		\mathcal{B}_1^-&\lesssim\frac{1}{\alpha}\sum\limits_{m=1}^{\infty}\sum\limits_{n=m}^{\infty}\sum\limits_{s=Cn}^{\infty}C_{n,s}\sum\limits_{i\in\Z}\sum\limits_{Q\in\mathcal{F}:l(Q)=2^{i-s}}\norm{v_i}_1\norm{b_{_Q}^n}_1\\
		&\lesssim\frac{1}{\alpha}\sum\limits_{m=1}^{\infty}\sum\limits_{n=m}^{\infty}\sum\limits_{s=Cn}^{\infty}C_{n,s}\sum\limits_{i\in\Z}\sum\limits_{Q\in\mathcal{F}:l(Q)=2^{i-s}}\norm{b_{_Q}^n}_1\\
		&\lesssim\frac{1}{\alpha}\sum\limits_{m=1}^{\infty}\sum\limits_{n=m}^{\infty}\sum\limits_{s=Cn}^{\infty}s2^n2^{-\delta s}\sum\limits_{Q\in\mathcal{F}}\norm{b_{_Q}^n}_1\\
		&\lesssim\frac{1}{\alpha}\sum\limits_{m=1}^{\infty}2^{-98m}\sum\limits_{n=m}^{\infty}\sum\limits_{Q\in\mathcal{F}}\norm{b_{_Q}^n}_1\\
		&\lesssim\frac{1}{\alpha}\norm{f}_1.
	\end{align*}
	To deal with $\mathcal{B}_2^-$, we simply break the kernel into $L^1$ and $L^2$ parts. Indeed we have,
	\begin{align*}
		\mathcal{B}_2^-\leq&\abs[\Big]{\set[\Big]{x\in\R^d:\sum\limits_{m=1}^{\infty}\sum\limits_{n=m}^{\infty}\sum\limits_{s=Cn}^{\infty}\sum\limits_{r=0}^{s2^{n+2}-1}M\Bigl(\sum\limits_{i\equiv r(mod\;s2^{n+2})}(\Gamma_{m,s}^i-H_m^i)*B_{i-s}^n\Bigr)(x)>\frac{\alpha}{80}}}\\
		&+\abs[\Big]{\set[\Big]{x\in\R^d:\sum\limits_{m=1}^{\infty}\sum\limits_{n=m}^{\infty}\sum\limits_{s=Cn}^{\infty}\sum\limits_{r=0}^{s2^{n+2}-1}M\Bigl(\sum\limits_{i\equiv r(mod\;s2^{n+2})}\Gamma_{m,s}^i*B_{i-s}^n\Bigr)(x)>\frac{\alpha}{80}}}\\
		=&\mathcal{B}_{2,1}^-+\mathcal{B}_{2,2}^-
	\end{align*}
	The estimate for $\mathcal{B}_{2,1}^-$ follows by positivity and weak type (1,1) boundedness of $M$ and estimate (\ref{Seeger2}). 
	\begin{align*}			
		\mathcal{B}_{2,1}^-&\lesssim\frac{1}{\alpha}\sum\limits_{m=1}^{\infty}\sum\limits_{n=m}^{\infty}\sum\limits_{s=Cn}^{\infty}\sum\limits_{r=0}^{s2^{n+2}-1}\norm[\Big]{\sum\limits_{i\equiv r(mod\;s2^{n+2})}(\Gamma_{m,s}^i-H_m^i)*B_{i-s}^n}_1\\
		&\lesssim\frac{1}{\alpha}\sum\limits_{m=1}^{\infty}\sum\limits_{n=m}^{\infty}\sum\limits_{s=Cn}^{\infty}\sum\limits_{r=0}^{s2^{n+2}-1}\sum\limits_{Q\in\mathcal{F}}2^{-(\delta_1-\delta)s}\norm{b_{_Q}^n}_1\\
		&\lesssim\frac{1}{\alpha}\sum\limits_{m=1}^{\infty}2^{-98m}\sum\limits_{n\in\N}\sum\limits_{Q\in\mathcal{F}}\norm{b_{_Q}^n}_1\\
		&\lesssim\frac{1}{\alpha}\norm{f}_1.
	\end{align*}
	To estimate $\mathcal{B}_{2,2}^-$, we use $L^2$ boundedness of $M$ and estimate (\ref{Seeger1}) to get,
	\begin{align*}
		\mathcal{B}_{2,2}^-&\lesssim\frac{1}{\alpha^2}\Bigl(\sum\limits_{m=1}^{\infty}\sum\limits_{n=m}^{\infty}\sum\limits_{s=Cn}^{\infty}\sum\limits_{r=0}^{s2^{n+2}-1}\norm[\Big]{M\Bigl(\sum\limits_{i\equiv r(mod\;s2^{n+2})}\Gamma_{m,s}^i*B_{i-s}^n\Bigr)}_2\Bigr)^2\\
		&\lesssim\frac{1}{\alpha^2}\Bigl(\sum\limits_{m=1}^{\infty}\sum\limits_{n=m}^{\infty}\sum\limits_{s=Cn}^{\infty}\sum\limits_{r=0}^{s2^{n+2}-1}\norm[\Big]{\sum\limits_{i\equiv r(mod\;s2^{n+2})}\Gamma_{m,s}^i*B_{i-s}^n}_2\Bigr)^2\\
		&\lesssim\frac{1}{\alpha}\Bigl(\sum\limits_{m=1}^{\infty}\sum\limits_{n=m}^{\infty}\sum\limits_{s=Cn}^{\infty}s2^n2^{-\frac{\delta s}{2}}\Bigl(\sum\limits_{Q\in\mathcal{F}}\norm{b_{_Q}^n}_{1}\Bigr)^{\frac{1}{2}}\Bigr)^2\\
		&\lesssim\frac{1}{\alpha}\Bigl(\sum\limits_{m=1}^{\infty}\sum\limits_{n=m}^{\infty}2^{-48n}\Bigl(\sum\limits_{Q\in\mathcal{F}}\norm{b_{_Q}^n}_{1}\Bigr)^{\frac{1}{2}}\Bigr)^2\\
		&\lesssim\frac{1}{\alpha}\Bigl(\sum\limits_{m=1}^{\infty}\Bigl(\sum\limits_{n=m}^{\infty}2^{-96n}\Bigr)^{\frac{1}{2}}\Bigr)^2\Bigl(\sum\limits_{n=1}^{\infty}\sum\limits_{Q\in\mathcal{F}}\norm{b_{_Q}^n}_1\Bigr)\\
		&\lesssim\frac{1}{\alpha}\norm{f}_1,
	\end{align*}
	where we have used Cauchy-Schwartz inequality in the second to last step.
	
	\section{Proof of \Cref{homega}}

	The proof of \Cref{homega} follows the same line of arguments as of \Cref{Main}. In this section we will provide the necessary modifications required.  Here  we denote $K(x)= h(|x|)\Omega\Bigl(\frac{x}{|x|}\Bigr) |x|^{-d}$.   Eq. \eqref{discr} will be replaced by
	\[T^*_{\Omega,h}f\leq \|h\|_\infty M_{\Omega}f+\sup\limits_{k\in\Z}\;\abs{\sum\limits_{i>k}K^i*f}.\]

	We apply the same decomposition of the function $f$ as in \Cref{function} .  In \cite{FP}, it was shown that $\widehat{K^i}$ satisfies the inequality \eqref{Fourierestimate}. Hence, estimation for $\mathcal{H}_{k,1}$  will follow immediately.   Routinely one can check that \Cref{T_{-1}}  and \Cref{l2}  are true in this setting  ( bounds will have a factor of $\|h\|_\infty$ ).  So we have the appropriate estimates for  $\mathcal{H}_{k,j},\;j=2,3.$
	
	For the bad part  we will follow exactly the same argument given   in  \Cref{bad} except for the term $\mathcal{B}_2^-$.  Due to unavailability of Seeger type estimate \eqref{S2} we will follow the approach of  Christ and Rubio de Francia \cite{CR}.   
	
	\begin{equation*}
		H_m^i= K_m^{i-}-K_{m-1}^{i-}
	\end{equation*}
	It is enough to handle $\norm{\sum\limits_{i\in I}K_m^{i-}*B^n_{i-s}}_2$. Now,
	\begin{eqnarray*}
		\norm{\sum\limits_{i\in I}K_m^{i-}*B^n_{i-s}}_2^2 &=&  \sum\limits_{i\in I}\norm{K_m^{i-}*B^n_{i-s}}_2^2 + 2\sum\limits_{\substack{i,i'\in I\\ i\leq i'-3}} \langle K_m^{i-}\ast B^n_{i-s},  K_m^{i'-}\ast B^n_{i'-s}\rangle\\
		&& + 2\sum\limits_{\substack{i,i'\in I\\  i'-3< i<i'}} \langle K_m^{i-}\ast B^n_{i-s},  K_m^{i'-}\ast B^n_{i'-s}\rangle\\
	\end{eqnarray*}
	By Cauchy Schwartz inequality the third term is dominated by the first. Recall $\|\phi_i\|_1=1$.  Therefore,
	\begin{equation}
		\norm{K_m^{i-}*B^n_{i-s}}_2^2\leq  \norm{K^{i-}*B^n_{i-s}}_2^2 \lesssim 2^{-2(\delta_1-\delta) s}\alpha \|h\|_\infty^2\| B^n_{i-s}\|_1,
		\label{CR1}
	\end{equation}
	where the last inequality follows from  the argument of inequality 2.3 of \cite{CR}. For the cross terms 
	\begin{eqnarray*}
		\Bigl| \langle K_m^{i-}\ast B^n_{i-s},  K_m^{i'-}\ast B^n_{i'-s}\rangle\Bigr| &= &\Bigl|\langle K^{i-}\ast \tilde{K}^{i'-}\ast B^n_{i-s},  \tilde{\phi}_{2^m-i}\ast \phi_{2^m-i'}\ast B^n_{i'-s}\rangle\Bigr| \\
		&\leq& \|B^n_{i'-s}\|_1 \|K^{i-}\ast \tilde{K}^{i'-}\ast B^n_{i-s}\|_\infty\\
	\end{eqnarray*}
	By using Lemma 6.1 of \cite{CR}, summing in $i,i'$  and \Cref{CR1} we get the following. 
	\begin{lemma}
		\[\norm{\sum\limits_{i\in I}H_m^i*B^n_{i-s}}_2\lesssim 2^{-(\delta_1-\delta) s}\alpha^{\frac{1}{2}}\|h\|_\infty\Bigl(\sum\limits_{Q\in\mathcal{Q}}\norm{b_{_Q}^n}_{1}\Bigr)^{\frac{1}{2}}\]
	\end{lemma}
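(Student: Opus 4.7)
The plan is to finish the computation that has already been started in the paragraphs preceding the lemma. By the triangle inequality and the definition $H_m^i=K_m^{i-}-K_{m-1}^{i-}$, it suffices to bound $\bigl\Vert\sum_{i\in I}K_m^{i-}\ast B_{i-s}^n\bigr\Vert_2$ and treat the $m-1$ term identically. Expand the square of the $L^2$ norm as a diagonal sum plus cross terms. The diagonal contribution is handled by \eqref{CR1}: summing the estimate $\Vert K_m^{i-}\ast B_{i-s}^n\Vert_2^2\lesssim 2^{-2(\delta_1-\delta)s}\alpha\Vert h\Vert_\infty^2\Vert B_{i-s}^n\Vert_1$ over $i\in I$ produces $2^{-2(\delta_1-\delta)s}\alpha\Vert h\Vert_\infty^2\sum_{Q\in\mathcal{Q}}\Vert b_Q^n\Vert_1$, whose square root is exactly the desired bound. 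The close cross terms with $i'-3<i<i'$ are already dominated by the diagonal via Cauchy--Schwarz.

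The remaining work is the far cross terms $i\leq i'-3$. The authors have already rewritten the inner product by moving one copy of $\phi$ to the other side:
\[
\bigl|\langle K_m^{i-}\ast B_{i-s}^n,K_m^{i'-}\ast B_{i'-s}^n\rangle\bigr|\leq \Vert B_{i'-s}^n\Vert_1\,\Vert K^{i-}\ast \widetilde{K}^{i'-}\ast B_{i-s}^n\Vert_\infty,
\]
using $\Vert\phi_j\Vert_1=1$. The crux is now to estimate $\Vert K^{i-}\ast\widetilde{K}^{i'-}\ast B_{i-s}^n\Vert_\infty$. Here I would invoke Lemma 6.1 of \cite{CR}, which uses the smoothness hypothesis $\partial_\theta\Omega\in L^\infty(\Sp^{d-1})$ together with the radial boundedness of $h$ to produce decay in the scale gap $i'-i$: specifically, the convolution of two annular-supported kernels at well-separated scales contributes an extra factor $2^{-c(i'-i)}$, while the truncation $|\Omega^-|\leq 2^{\delta s}\Vert\Omega\Vert_1$ feeds into the argument of \cite{CR} to give the decay $2^{-2(\delta_1-\delta)s}$.

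Combining, each far cross term is bounded (up to constants) by
\[
2^{-c(i'-i)}\,2^{-2(\delta_1-\delta)s}\alpha\Vert h\Vert_\infty^2\Vert B_{i-s}^n\Vert_1\Vert B_{i'-s}^n\Vert_1\cdot (\text{volume factor}),
\]
and the volume factor cancels against the $L^1$ norm using the quasi-orthogonality of the annular supports. Summing first in $i'-i\geq 3$ (a geometric series in $2^{-c(i'-i)}$), then in $i$ over $I$, produces $2^{-2(\delta_1-\delta)s}\alpha\Vert h\Vert_\infty^2\sum_{Q\in\mathcal{Q}}\Vert b_Q^n\Vert_1$, matching the diagonal bound. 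Taking a square root yields the lemma.

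The main obstacle will be the transcription of Lemma 6.1 of \cite{CR} into the present framework: there the argument exploits an integration-by-parts in the angular variable on $\Sp^{d-1}$, which is precisely where $\partial_\theta\Omega\in L^\infty$ is used, and one must check that the bounded radial factor $h$ does not interfere with the angular cancellation. Once the decay $2^{-c(i'-i)}\cdot 2^{-2(\delta_1-\delta)s}$ is established for $\Vert K^{i-}\ast\widetilde{K}^{i'-}\Vert_\infty$ (times the appropriate support correction), the summation is routine and the final bound with the factor $\Vert h\Vert_\infty$ drops out from tracking $\Vert h\Vert_\infty$ throughout the kernel estimates.
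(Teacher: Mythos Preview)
Your proposal is correct and takes essentially the same approach as the paper: the paper's proof is precisely the computation displayed immediately before the lemma, which expands the squared $L^2$ norm into diagonal, near-diagonal, and far cross terms, handles the diagonal via \eqref{CR1}, the near terms by Cauchy--Schwarz, and the far terms by invoking Lemma~6.1 of \cite{CR} and summing in $i,i'$. Your additional speculation about the precise mechanism inside Lemma~6.1 of \cite{CR} (geometric decay $2^{-c(i'-i)}$ and a volume-factor cancellation) goes beyond what the paper itself records, so you should verify those particulars directly against \cite{CR}, but the overall architecture matches.
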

	In view of the above lemma we conclude the proof of \Cref{homega},
	\begin{align*}
		\mathcal{B}_{2}^-
		&\lesssim\frac{1}{\alpha^2}\Bigl(\sum\limits_{m=1}^{\infty}\sum\limits_{n=m}^{\infty}\sum\limits_{s=Cn}^{\infty}\sum\limits_{r=0}^{s2^{n+2}-1}\norm[\Big]{\sum\limits_{i\equiv r(mod\;s2^{n+2})} H_{m}^i*B_{i-s}^n}_2\Bigr)^2\\
		&\lesssim\frac{1}{\alpha}\Bigl(\sum\limits_{m=1}^{\infty}\sum\limits_{n=m}^{\infty}\sum\limits_{s=Cn}^{\infty}s2^n2^{-\frac{\delta s}{2}}\Bigl(\sum\limits_{Q\in\mathcal{F}}\norm{b_{_Q}^n}_{1}\Bigr)^{\frac{1}{2}}\Bigr)^2\\
		&\lesssim\frac{1}{\alpha}\Bigl(\sum\limits_{m=1}^{\infty}\Bigl(\sum\limits_{n=m}^{\infty}2^{-96n}\Bigr)^{\frac{1}{2}}\Bigr)^2\Bigl(\sum\limits_{n=1}^{\infty}\sum\limits_{Q\in\mathcal{F}}\norm{b_{_Q}^n}_1\Bigr)\\
		&\lesssim\frac{1}{\alpha}\norm{f}_1.
	\end{align*}
	\section{Proof of \Cref{Mainw}}
	Let $f\in L\log\log L(\R^d)$ and  $w\in A_1$.   It is enough to show the appropriate boundedness for $\sup\limits_{k\in\Z}\;\abs{\sum\limits_{i>k}K^i*f}$, taking into account \Cref{discr},  $M_{\Omega}f(x)\leq\norm{\Omega}_{L^{\infty}}Mf(x)$  a.e. and $\norm{M}_{L^1(w)\to L^{1,\infty}(w)}\lesssim [w]_{A_1}$ (see \cite{G}).
	
	We follow the exact decomposition of the function $f$  as in \Cref{function}.  In this weighted setting, we need a modification of the decomposition of the kernel. Let $\epsilon_w>0$ be defined as
	\[\epsilon_w=\frac{\log([w]_{A_\infty}+1)}{2(1+c_d[w]_{A_\infty}(1+\log([w]_{A_\infty}+1)))}\]
	and $\Delta$ be the greatest integer less than or equal to ${\epsilon^{-1}_w}$. For this section, we denote $K_n^i$ as 
	\[K_n^i=K^i*\phi_{_{\Delta2^n-i}},\;n\in\N,\]
	and $K_0^i$ is defined as earlier.
	We decompose our operator as in \eqref{break} with the modified $K_n^i$. The exceptional set $E=\cup_{_{Q\in\mathcal{F}}}(100d)Q$ is estimated as follows
	\[w(E)\lesssim\sum\limits_{Q\in\mathcal{F}}\frac{w((100d)Q)}{\abs{(100d)Q}}\abs{Q}\leq\frac{[w]_{A_1}}{\alpha}\sum\limits_{Q\in\mathcal{F}}\inf\limits_{y\in(100d)Q} w(y) \int_Q f \leq\frac{[w]_{A_1}}{\alpha}\norm{f}_{L^1(w)}.\]
	To estimate $\mathcal{H}_{k,j},j=1,2,3$, the good parts, we need appropriate Fefferman-Stein type inequalities. The following inequality was proved in \cite{DHL} (Theorem 1.3).
	\begin{lemma}[\cite{DHL}]\label{FSi1}
		Let $1<r<p$, then the following is true:
		\begin{equation}
			\norm[\Big]{\sup\limits_k\;\abs[\Big]{\sum\limits_{i>k}K^i*g}}_{L^p(w)}\lesssim p^2(p')^{\frac{1}{p}}(r')^{1+\frac{1}{p'}}\norm{g}_{L^p(M_rw)},
		\end{equation}
		where $M_rw=M(w^r)^{\frac{1}{r}}$.
	\end{lemma}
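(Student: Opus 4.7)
The plan is to derive this weighted Fefferman–Stein estimate from a sparse domination for the discretized maximal truncation $g \mapsto \sup_k |\sum_{i>k} K^i * g|$. More precisely, I would first aim to establish a sparse form bound: for every pair of bounded compactly supported functions $g,h$ there is a sparse collection $\mathcal{S}$ of dyadic cubes with
\[
\Bigl|\bigl\langle \sup_k\bigl|\sum_{i>k} K^i * g\bigr|,\, h\bigr\rangle\Bigr| \lesssim \sum_{Q \in \mathcal{S}} \langle g\rangle_{1,Q}\,\langle h\rangle_{r,Q}\,|Q|,
\]
where $\langle \cdot \rangle_{s,Q}$ denotes the $L^s$-average over $Q$. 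The route is the Lerner/Conde-Alonso–Culiuc–Di Plinio–Ou style stopping-time argument, which requires exactly two ingredients already available in this paper: a rough $L^s$ bound ($s$ slightly bigger than $1$) for the maximal operator, and the geometric $L^2$ decay estimate from \Cref{l2} for the kernel pieces $T^*_m$. One splits $K^i = K^{i+} + K^{i-}$ as in \Cref{bad} to handle $\Omega \in L\log L$ instead of $L^\infty$, so that the $L\log L$ norm of $\Omega$ absorbs the loss coming from stopping cubes.

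Once the sparse bound is in hand, the Fefferman–Stein inequality follows by testing duality. Given $g \in L^p(M_rw)$ and $h \in L^{p'}(w)$, I would estimate
\[
\sum_{Q\in\mathcal{S}} \langle g\rangle_{1,Q}\langle h\rangle_{r,Q}|Q|
\leq \sum_{Q\in\mathcal{S}}\langle g\rangle_{1,Q}\langle hw^{-1/p'}\rangle_{1,Q}^{1}\cdot(\text{correction by }M_rw),
\]
and then use the classical Carleson embedding together with sharp weighted bounds for the dyadic maximal function. The factor $(r')^{1+1/p'}$ is produced by applying $\langle h\rangle_{r,Q}\le M_r h$ with $\|M_r\|_{L^{p'}(w)\to L^{p'}(w)} \lesssim (r')^{1/p'}$; the extra $r'$ arises in the Carleson embedding step when the sparse testing constant is converted into an $L^p(M_rw)$ norm. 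The $p^2 (p')^{1/p}$ behaviour is the standard sharp constant for the dyadic maximal theorem dualized into the sparse form.

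The main obstacle will be establishing the sparse form with the correct quantitative dependence. The qualitative sparse bound for the maximal truncation of $T_\Omega$ is by now well known, but obtaining the $(r')^{1+1/p'}$ dependence forces one to carry out the stopping-time iteration with an explicit decay input: each iteration must use the $L^r$-boundedness constant of a local version of $\sup_k|\sum_{i>k}K^i \ast \cdot|$, and the geometric decay in \Cref{l2} (summed against the $L\log L$ splitting of $\Omega$) must be shown to beat the $r'$ loss incurred at each step. If one organizes the iteration so that, at every exceptional cube, one throws away a set of measure $\eta|Q|$ controlled by the local $L^r$ estimate with constant $\lesssim r'$, the resulting sparse bound has precisely the advertised dependence, and the rest of the argument is routine.
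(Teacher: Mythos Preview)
The paper does not actually prove this lemma at all: it is stated as a citation of Theorem~1.3 in \cite{DHL} and used as a black box. So there is nothing in the paper to compare your argument against directly. That said, your proposed route---sparse domination for the maximal truncation followed by a duality/Carleson-embedding step to extract the Fefferman--Stein inequality with the stated constants---is precisely the strategy of \cite{DHL}, so in spirit you are reconstructing the cited proof rather than inventing an alternative.

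Two remarks on the details. First, this lemma lives in Section~4, which is the proof of \Cref{Mainw} and explicitly assumes $\Omega\in L^\infty(\Sp^{d-1})$; the splitting $K^i=K^{i+}+K^{i-}$ from \Cref{bad} that you invoke is designed to reduce the $L\log L$ case to the bounded case and is not needed here. Second, your sparse form $\sum_{Q}\langle g\rangle_{1,Q}\langle h\rangle_{r,Q}\abs{Q}$ is a bit too optimistic as written: for rough maximal singular integrals the abstract machinery of \cite{DHL} produces a $(1+\epsilon,1+\epsilon)$-type sparse domination (as the paper itself notes just below this lemma when treating $T_m^*$), and the exponent $r$ in the weighted estimate enters through the choice of $\epsilon$ tied to $r$ and $p$, not from a literal $(1,r)$ form. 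This does not break your overall plan, but the bookkeeping that yields exactly $p^2(p')^{1/p}(r')^{1+1/p'}$ is more delicate than your sketch suggests, and your duality computation would need to be carried out with both averages strictly above~$1$.
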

	We need  a Fefferman-Stein inequality for $T_m^*$. By applying the abstract result, Theorem 3.3 of \cite{DHL} (taking into consideration of their calculation of sparse constants in page 1890) to $T_m^*$ we  get $(1+\epsilon_w,1+\epsilon_w)$ sparse domination, namely we have
	\[\abs{\langle T_m^*f,g\rangle}\lesssim \frac{2^{-c_2 2^{m-1}}}{\epsilon_w}\sum\limits_{\mathcal{Q}\in\mathcal{S}}\abs{\mathcal{Q}}^{-\frac{1-\epsilon_w}{1+\epsilon_w}}\norm{f}_{L^{1+\epsilon_w}(\mathcal{Q})}\norm{g}_{L^{1+\epsilon_w}(\mathcal{Q})},\]
	where $\mathcal{S}$ is a $\frac{1}{2}$-sparse family of cubes i.e. for each $\mathcal{Q}\in\mathcal{S}$ there exists $E_{\mathcal{Q}}\subset\mathcal{Q}$ such that $\abs{E_{\mathcal{Q}}}\geq \frac{1}{2}\abs{\mathcal{Q}}$ and $\{E_{\mathcal{Q}}:\mathcal{Q}\in\mathcal{S}\}$ are pairwise disjoint. Now, following exactly the proof of Theorem 1.3 of \cite{DHL} and keeping track of the constant depending on $m$, we obtain the required inequality.
	\begin{lemma}\label{FSi2}
		Let $1<r<p$ be such that $(r-1)=2\epsilon_w(pr-r+1)$. Then we have
		\begin{equation}
			\norm{T_m^*g}_{L^p(w)}\lesssim 2^{-c_2 2^{m-1}}p^2(p')^{\frac{1}{p}}(r')^{1+\frac{1}{p'}}\norm{g}_{L^p(M_rw)},
		\end{equation}
		where $c_2>0$ as in \Cref{l2}.
	\end{lemma}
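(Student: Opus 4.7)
The strategy is to first obtain a sparse domination for $T_m^*$ from an abstract result, and then convert the sparse form into the weighted $L^p$ inequality via duality, carefully tracking how the constant $2^{-c_2 2^m}$ from \Cref{l2} propagates through the argument. This is exactly the approach sketched by the authors in the paragraph preceding the lemma.

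First I would invoke Theorem~3.3 of \cite{DHL} applied to $T_m^*$. The two inputs required are the $L^2\to L^2$ bound $\norm{T_m^* g}_2\lesssim 2^{-c_2 2^m}\norm{g}_2$ from \Cref{l2}, together with the mild kernel regularity of $K_m^i - K_{m-1}^i$ (which holds because these are smoothed versions of $K^i$ convolved with $\phi_{2^m-i}-\phi_{2^{m-1}-i}$). This produces the $(1+\epsilon_w,1+\epsilon_w)$-sparse domination
\[\abs{\langle T_m^* g, h\rangle}\lesssim \frac{2^{-c_2 2^{m-1}}}{\epsilon_w}\sum_{\mathcal{Q}\in\mathcal{S}}\abs{\mathcal{Q}}^{-\frac{1-\epsilon_w}{1+\epsilon_w}}\norm{g}_{L^{1+\epsilon_w}(\mathcal{Q})}\norm{h}_{L^{1+\epsilon_w}(\mathcal{Q})},\]
where the loss from $2^{-c_2 2^m}$ to $2^{-c_2 2^{m-1}}$ reflects the interpolation cost when passing from $L^2$ decay to $L^{1+\epsilon_w}$ sparse decay.

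Next I would dualize: $\norm{T_m^* g}_{L^p(w)} = \sup\{\abs{\langle T_m^* g, hw\rangle}:\norm{h}_{L^{p'}(w)}\leq 1\}$, plug in the sparse domination, and rewrite $\abs{\mathcal{Q}}^{-\frac{1-\epsilon_w}{1+\epsilon_w}}\norm{g}_{L^{1+\epsilon_w}(\mathcal{Q})}\norm{hw}_{L^{1+\epsilon_w}(\mathcal{Q})}=\abs{\mathcal{Q}}\langle g\rangle_{1+\epsilon_w,\mathcal{Q}}\langle hw\rangle_{1+\epsilon_w,\mathcal{Q}}$. Using the $\frac{1}{2}$-sparseness to replace $\abs{\mathcal{Q}}$ by $2\abs{E_{\mathcal{Q}}}$ and recognising the resulting sum as an integral of a product of maximal functions over the disjoint sets $E_{\mathcal{Q}}$, then inserting the factor $(M_rw)^{1/p}(M_rw)^{-1/p}$ inside $\langle g\rangle_{1+\epsilon_w,\mathcal{Q}}$ and applying H\"older with exponents $p,p'$, reduces matters to the Fefferman--Stein type inequality
\[\norm{M_{1+\epsilon_w} g}_{L^p(M_rw)} \lesssim p^2 (p')^{\frac{1}{p}}(r')^{1+\frac{1}{p'}}\norm{g}_{L^p(M_rw)}\]
together with the dual bound $\norm{M_{1+\epsilon_w}(hw)}_{L^{p'}(w^{1-p'})}\lesssim \norm{h}_{L^{p'}(w)}$. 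The algebraic condition $(r-1)=2\epsilon_w(pr-r+1)$ is imposed precisely so that after these H\"older and maximal function steps, the Muckenhoupt exponents land correctly on $M_rw$ and $w^{1-p'}$.

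The main obstacle is the bookkeeping of constants: one must verify that the single factor $p^2(p')^{1/p}(r')^{1+1/p'}$ captures the accumulated contributions from the two Fefferman--Stein applications and from the $L^p$--$L^{p'}$ H\"older step, and that the $\epsilon_w^{-1}$ appearing in the sparse constant is absorbed into $(r')^{1+1/p'}$ via the defining relation $(r-1)=2\epsilon_w(pr-r+1)$, which forces $r'\approx \epsilon_w^{-1}$. Since the only departure from Theorem~1.3 of \cite{DHL} is the presence of the factor $2^{-c_2 2^{m-1}}$ in the sparse constant, and this factor is independent of $w$, $p$, and $r$, it passes unchanged through the entire reduction and appears as the claimed prefactor.
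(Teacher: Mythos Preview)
Your proposal is correct and follows essentially the same approach as the paper: obtain the $(1+\epsilon_w,1+\epsilon_w)$ sparse domination for $T_m^*$ from Theorem~3.3 of \cite{DHL} with the decay factor $2^{-c_2 2^{m-1}}$, and then run the proof of Theorem~1.3 of \cite{DHL} (the dualization and Fefferman--Stein argument you describe) while tracking that this factor is carried unchanged. The paper itself only states this outline and does not spell out the duality step, so your more detailed sketch is faithful to what the authors intend.
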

	We also state the optimal reverse H\"{o}lder inequality obtained in \cite{HPR}.
	\begin{lemma}[\cite{HPR}]\label{Rhi}
		Let $w\in A_{\infty}$. Then there exists an absolute constant $c_d>0$ such that for any $r\in[1,1+\frac{1}{c_d[w]_{A_\infty}}]$ and cube $Q$, we have
		\[\Bigl(\frac{1}{\abs{Q}}\int_Q w^r\Bigr)^{\frac{1}{r}}\leq \frac{2}{\abs{Q}}\int_Q w.\]
		Consequently, the following pointwise inequality is true:
		\[M_{r_w}w\lesssim [w]_{A_1}w,\] 
		where $r_w=1+\frac{1}{c_d[w]_{A_\infty}}$.
	\end{lemma}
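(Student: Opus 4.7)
The plan is to follow the approach of Hyt\"onen-P\'erez-Rela \cite{HPR}, whose proof of the Fujii-Wilson sharp reverse H\"older inequality decomposes into two distinct parts: (i) establishing the integrated version
\[
\frac{1}{\abs{Q}}\int_Q w^{1+\epsilon} \le 2^{1+\epsilon}\Bigl(\frac{1}{\abs{Q}}\int_Q w\Bigr)^{1+\epsilon}
\]
for $\epsilon = 1/(c_d[w]_{A_\infty})$ and some dimensional $c_d$, from which the stated inequality follows by taking $(1+\epsilon)$-th roots and using $2^{1/(1+\epsilon)}\le 2$; and (ii) the formal deduction of the pointwise bound $M_{r_w}w\lesssim [w]_{A_1}w$ via the $A_1$ hypothesis. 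A preliminary truncation $w_N=\min(w,N)\chi_{B(0,N)}$, whose Fujii-Wilson characteristic is comparable to $[w]_{A_\infty}$, allows us to work with bounded $w$ of compact support and pass to the limit afterward.

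To set up (i), fix $Q$, set $w_Q=\abs{Q}^{-1} w(Q)$, and let $M_d^Q$ denote the dyadic maximal function over dyadic subcubes of $Q$, so that $w \le M_d^Q w$ a.e.\ on $Q$ by Lebesgue differentiation and $M_d^Q w \le M(w\chi_Q)$ pointwise on $Q$. The Fujii-Wilson hypothesis then provides the linchpin bound
\[
\int_Q M_d^Q w \,\le\, \int_Q M(w\chi_Q) \,\le\, [w]_{A_\infty}\, w(Q).
\]
For $t>w_Q$, the Calder\'on-Zygmund stopping decomposes $\Omega_t := \{M_d^Q w > t\}\cap Q$ as a disjoint union of maximal dyadic subcubes $\{Q_j^t\}$ with $t<\abs{Q_j^t}^{-1}w(Q_j^t)\le 2^d t$, hence $w(\Omega_t)\le 2^d t\,\abs{\Omega_t}$. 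Starting from $w\le M_d^Q w$, a layer-cake expansion gives
\[
\int_Q w^{1+\epsilon} \,\le\, \int_Q w\,(M_d^Q w)^\epsilon \,=\, \epsilon\int_0^\infty t^{\epsilon-1} w(\Omega_t)\, dt \,\le\, w_Q^{1+\epsilon}\abs{Q} \,+\, 2^d\epsilon\int_{w_Q}^\infty t^\epsilon\abs{\Omega_t}\,dt,
\]
reducing the problem to bounding the tail integral.

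The technical heart, and main obstacle, is showing $\int_{w_Q}^\infty t^\epsilon\abs{\Omega_t}\,dt \lesssim [w]_{A_\infty}\,w_Q^{1+\epsilon}\abs{Q}$ for $\epsilon \le 1/(c_d[w]_{A_\infty})$. A single Chebyshev application to the linchpin bound yields only $\abs{\Omega_t}\le [w]_{A_\infty}w_Q\abs{Q}/t$, whose tail integral already diverges for $\epsilon>0$. The remedy is to apply the Fujii-Wilson bound recursively inside each stopping cube $Q_j^t$: since $\int_{Q_j^t}M_{d,Q_j^t}w\le [w]_{A_\infty}w(Q_j^t)\le 2^d[w]_{A_\infty}t\,\abs{Q_j^t}$, Chebyshev on each $Q_j^t$ followed by summation over $j$ produces the quantitative decay $\abs{\Omega_{2^d\tau t}}\le([w]_{A_\infty}/\tau)\abs{\Omega_t}$ for every $\tau>1$. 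Iterating this with a carefully chosen geometric scale gives sufficient decay of $\abs{\Omega_t}$ at infinity for the integral to be controlled by a convergent geometric series, and the choice $\epsilon=1/(c_d[w]_{A_\infty})$ with $c_d$ sufficiently large absorbs all constants so that $2^d\epsilon\cdot(\text{tail})\le 2^{1+\epsilon}-1$, completing (i). For the consequence, a.e.\ $x$ and every cube $Q\ni x$: the RHI gives $(\abs{Q}^{-1}\int_Q w^{r_w})^{1/r_w}\le 2\cdot\abs{Q}^{-1}\int_Q w \le 2\,Mw(x)$; taking the supremum over $Q\ni x$ yields $M_{r_w}w(x)\le 2Mw(x)$, and the $A_1$ hypothesis $Mw(x)\le[w]_{A_1}w(x)$ a.e.\ finishes the proof.
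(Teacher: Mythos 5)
The paper does not prove this lemma; it simply cites Hyt\"onen--P\'erez--Rela \cite{HPR}, so there is no internal proof to compare against. Your overall strategy --- truncation, layer-cake against the local dyadic maximal function, Calder\'on--Zygmund stopping cubes, closing via the Fujii--Wilson functional --- is indeed the standard one, and your preliminary reductions, the chain
\[
\int_Q w^{1+\epsilon}\le\int_Q w\,(M_d^Qw)^\epsilon
=\epsilon\int_0^\infty t^{\epsilon-1}w(\Omega_t)\,dt
\le w_Q^{1+\epsilon}\abs{Q}+2^d\epsilon\int_{w_Q}^\infty t^\epsilon\abs{\Omega_t}\,dt,
\]
and the deduction of $M_{r_w}w\lesssim[w]_{A_1}w$ from the reverse H\"older inequality are all correct.

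The gap is in the ``technical heart''. The iterated Chebyshev bound you propose, $\abs{\Omega_{2^d\tau t}}\le([w]_{A_\infty}/\tau)\abs{\Omega_t}$, is correct but strictly too weak: after $k$ iterations it gives $\abs{\Omega_{(2^d\tau)^k w_Q}}\le([w]_{A_\infty}/\tau)^k\abs{Q}$, which is only the power-law decay $\abs{\Omega_t}\lesssim(t/w_Q)^{-\beta}\abs{Q}$ with $\beta=\log(\tau/[w]_{A_\infty})/\log(2^d\tau)<1$, \emph{for every choice of $\tau>1$}. When this is substituted into the tail integral one obtains a geometric series with ratio $(2^d\tau)^{1+\epsilon}\cdot[w]_{A_\infty}/\tau\ge 2^d[w]_{A_\infty}>1$, which diverges no matter how $\tau$ is chosen; equivalently, $\int_{w_Q}^\infty t^\epsilon(t/w_Q)^{-\beta}\,dt=\infty$ since $\beta<1<1+\epsilon$. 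In fact the power-law decay obtained this way is strictly weaker than what Fujii--Wilson already implies for free, namely $\int_0^\infty\abs{\Omega_t}\,dt=\int_QM_d^Qw\le[w]_{A_\infty}w_Q\abs{Q}<\infty$, and even that $L^1$ integrability alone does not control $\int_{w_Q}^\infty t^\epsilon\abs{\Omega_t}\,dt$. The actual argument in \cite{HPR} does not prove pointwise decay of $\abs{\Omega_t}$ at all; it instead closes a self-improving inequality by observing that $\int_{w_Q}^\infty t^\epsilon\abs{\Omega_t}\,dt\le\frac{1}{1+\epsilon}\int_Q(M_d^Qw)^{1+\epsilon}$ and using the Fujii--Wilson condition (applied on the stopping cubes) to produce a bound of the form
\[
\int_Q(M_d^Qw)^{1+\epsilon}\le[w]_{A_\infty}\,w_Q^\epsilon w(Q)+\theta(\epsilon,d,[w]_{A_\infty})\int_Q(M_d^Qw)^{1+\epsilon},
\]
with $\theta<1$ once $\epsilon\lesssim 1/(2^d[w]_{A_\infty})$, so that the recursive term absorbs into the left-hand side (for the truncated, hence $L^{1+\epsilon}$-finite, weight). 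Your sketch replaces this absorption step by an iterated weak-type estimate, and that replacement does not work.
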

	To estimate the  bad part, for $\lambda>0, s\geq 3$,  we require some estimates of the measure of set $E_\lambda^s$  which is defined as
	\[E_\lambda^s=\set[\Big]{x\in \R^d: \sum\limits_{n=1}^{s/Cs_w}\sup\limits_k\;\abs[\Big]{\sum\limits_{i>k}(K_n^i-K_0^i)*B_{i-s}^n(x)}>\lambda\alpha},\]
	where $C>0$ to be chosen later and $s_w=[w]_{A_\infty}\log([w]_{A_\infty}+1)$. The first is a unweighted measure estimate with a decay in $s$ and the other is a simple weighted estimate.
	\begin{lemma}\label{w_est1}
		Let $0<\lambda<1$ and $0<\delta<\min(\delta_1,\delta_2)/3$, where $\delta_1,\delta_2$ as in \Cref{Seeger} and \Cref{phi} respectively. Then for any non-negative weight $v$, we have
		\begin{align*}
			\abs{E_\lambda^s}&\lesssim \frac{1}{\lambda^2}2^{-\delta s}\sum\limits_{Q\in\mathcal{F}}\abs{Q},\\
			v(E_\lambda^s)&\lesssim\frac{1}{\lambda}\sum\limits_{Q\in\mathcal{F}}\abs{Q}\inf\limits_Q Mv.
		\end{align*}
	\end{lemma}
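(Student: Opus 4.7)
Both estimates are proved by Chebyshev's inequality---in $L^1(v)$ for the weighted bound and $L^2$ for the unweighted one---combined with the Cotlar-type bound of \Cref{Cotlar} and the Seeger decomposition of \Cref{Seeger}.

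For the weighted estimate, Chebyshev yields
\[v(E_\lambda^s)\leq \frac{1}{\lambda\alpha}\int\Bigl(\sum_{n=1}^{s/Cs_w}\sup_k\Bigl|\sum_{i>k}(K_n^i-K_0^i)*B_{i-s}^n\Bigr|\Bigr)v,\]
and I would bound $\sup_k|\sum_{i>k}\,\cdot\,|\leq\sum_i|\,\cdot\,|$ and then treat each term by Fubini. Since $\|K_n^i\|_\infty,\|K_0^i\|_\infty\lesssim 2^{-id}$ (from $\Omega\in L^\infty$ and $\|\phi_j\|_1=1$) with both kernels supported in $\{|x|\lesssim 2^i\}$, while $b_Q^n$ lives in a cube $Q$ of side $2^{i-s}$, one obtains for $y\in Q$
\[(|K_n^i-K_0^i|*v)(y)\lesssim 2^{-id}\,v(2^{s+5}Q)\lesssim \frac{v(2^{s+5}Q)}{|2^{s+5}Q|}\leq \inf_Q Mv,\]
so $\int|(K_n^i-K_0^i)*b_Q^n|\,v\lesssim\|b_Q^n\|_1\inf_Q Mv$. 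Collapsing the $n$-sum through the Calder\'on--Zygmund bound $\sum_n\|b_Q^n\|_1\lesssim\alpha|Q|$ and summing over $Q$ yields the weighted estimate.

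For the unweighted estimate, Chebyshev in $L^2$ gives
\[|E_\lambda^s|\leq \frac{1}{(\lambda\alpha)^2}\Bigl\|\sum_{n=1}^{s/Cs_w}\sup_k\Bigl|\sum_{i>k}(K_n^i-K_0^i)*B_{i-s}^n\Bigr|\Bigr\|_2^2.\]
I would expand $K_n^i-K_0^i=\sum_{m=1}^n H_m^i$, use \Cref{Cotlar} to replace each inner supremum by maximal-function terms plus the Cotlar tail $C_{n,s}\sum_i\sum_Q v_i*|b_Q^n|$ with $C_{n,s}=s2^n 2^{-(\delta_2-\delta)s}$, and split each $H_m^i=\Gamma_{m,s}^i+(H_m^i-\Gamma_{m,s}^i)$ via Seeger. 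The $\Gamma_{m,s}^i$ main term is bounded in $L^2$ through \eqref{Seeger1}; the Seeger error and the Cotlar tail are handled in $L^1$ through \eqref{Seeger2}, so that Chebyshev at the $L^1$ level produces a $1/\lambda$ bound (stronger than the required $1/\lambda^2$ since $\lambda<1$). Summing over $m\in[1,n]$, the residues $r\in[0,s2^{n+2})$, and $n\in[1,s/Cs_w]$, and choosing $\delta<\min(\delta_1,\delta_2)/3$ together with $C$ large, the constraint $n\leq s/Cs_w$ ensures $2^n\leq 2^{s/Cs_w}$, so that the polynomial-in-$s$ and $2^{s/Cs_w}$ factors are absorbed by the Seeger decay $2^{-(\delta_1-\delta)s}$.

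The main obstacle is the bookkeeping in the unweighted bound: one must verify that the combinatorial factor $s^{O(1)}2^{s/Cs_w}$ coming from the Cotlar and Seeger decompositions is overwhelmed by the decay $2^{-(\delta_1-\delta)s}$. Because $s_w=[w]_{A_\infty}\log([w]_{A_\infty}+1)\geq\log 2$ is bounded below, the requirement $1/(Cs_w)<\delta_1-2\delta$ can be met by an absolute constant $C$ depending only on $\delta_1$ and $\delta_2$. The weighted bound, by contrast, reduces to the single Fubini-plus-support computation described above.
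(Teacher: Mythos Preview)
Your approach matches the paper's: the weighted bound is exactly the Fubini-plus-support computation you describe, and the unweighted bound proceeds via the Cotlar inequality, the Seeger split $H_m^i=\Gamma_{m,s}^i+(H_m^i-\Gamma_{m,s}^i)$, and Chebyshev at the $L^1$ level for the tail and error pieces and at the $L^2$ level for the main piece.

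Two points to tighten. First, in this section the kernel is redefined as $K_n^i=K^i*\phi_{\Delta 2^n-i}$ with $\Delta=\lfloor\epsilon_w^{-1}\rfloor$, so you cannot invoke \Cref{Cotlar} verbatim: the paper restates the Cotlar inequality with modulus $\Delta s2^{n+2}$ and tail constant $\Delta s2^n2^{-\delta_2 s}$, and this adjustment is needed because the scale comparison $t_q>\Delta 2^m-i$ in the Cotlar proof can fail with the smaller modulus $s2^{n+2}$ when $\Delta$ is large. Second, your write-up applies $L^2$-Chebyshev to the full quantity defining $E_\lambda^s$ and only afterwards tries to treat the tail and Seeger-error pieces in $L^1$; that ordering does not work. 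What the paper does (and what you should do) is apply the pointwise Cotlar bound first, split $E_\lambda^s$ into three level sets $\mathcal{E}_1,\mathcal{E}_2,\mathcal{E}_3$, and then apply Chebyshev at the appropriate Lebesgue exponent on each piece separately.
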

	\begin{proof}
		We first observe that for the modified kernel $K_n^i-K_0^i=\sum\limits_{m=1}^n(K_m^i-K_{m-1}^i)=\sum\limits_{m=1}^nH_m^i$, the corresponding Cotlar-type inequality \eqref{CI} is given by
		\begin{align*}
			\sup\limits_{k\in\Z}\;\abs[\Big]{\sum\limits_{i>k}H_{m}^i*B_{i-s}^n}\lesssim\sum\limits_{r=0}^{\Delta s2^{n+2}-1}&M\Bigl(\sum\limits_{i\equiv r(mod\; \Delta s2^{n+2})}H_{m}^i*B_{i-s}^n\Bigr) \\ +&\Delta s2^n2^{-\delta_2s}\sum\limits_{i\in\Z}\sum\limits_{Q\in\mathcal{F}:l(Q)=2^{i-s}}v_i*\abs{b_{_Q}^n},
		\end{align*}
		Therefore in regard to the above inequality and the decomposition in Section \S 2.3, we get
		\begin{align*}
			\abs{E_\lambda^s}\lesssim&\abs[\Big]{\set[\Big]{x\in \R^d: \sum\limits_{n=1}^{s/Cs_w}\sum\limits_{m=1}^n\Delta s2^n2^{-\delta_2s}\sum\limits_{i\in\Z}\sum\limits_{Q\in\mathcal{F}:l(Q)=2^{i-s}}v_i*\abs{b_{_Q}^n}(x)>\frac{\lambda\alpha}{3}}}\\
			&+\abs[\Big]{\set[\Big]{x\in \R^d:\sum\limits_{n=1}^{s/Cs_w}\sum\limits_{m=1}^n\sum\limits_{r=0}^{\Delta s2^{n+2}-1}M\Bigl(\sum\limits_{i\equiv r(mod\;\Delta s2^{n+2})}(\Gamma_{m,s}^i-H_m^i)*B_{i-s}^n\Bigr)(x)>\frac{\lambda\alpha}{3}}}\\
			&+\abs[\Big]{\set[\Big]{x\in \R^d:\sum\limits_{n=1}^{s/Cs_w}\sum\limits_{m=1}^n\sum\limits_{r=0}^{\Delta s2^{n+2}-1}M\Bigl(\sum\limits_{i\equiv r(mod\;\Delta s2^{n+2})}\Gamma_{m,s}^i*B_{i-s}^n\Bigr)(x)>\frac{\lambda\alpha}{3}}}\\
			=&\mathcal{E}_1+\mathcal{E}_2+\mathcal{E}_3.
		\end{align*}
		We now choose $C$ to satisfy $C\delta>100$. By Chebyshev's inequality and $\norm{v_i}_1\lesssim 1$, we get
		\begin{align*}
			\mathcal{E}_1&\lesssim\frac{1}{\lambda\alpha}\sum\limits_{n=1}^{s/Cs_w}n\Delta s2^n2^{-\delta_2 s}\sum\limits_{Q\in\mathcal{F}}\norm{b_{_Q}^n}_1\\
			&\lesssim\frac{1}{\lambda\alpha}s^2 2^{-s(\delta_2-\frac{1}{Cs_w})}\sum\limits_{n=1}^{\infty}\sum\limits_{Q\in\mathcal{F}}\norm{f_{_Q}^n}_1\\
			&\lesssim\frac{1}{\lambda}2^{-\delta s}\sum\limits_{Q\in\mathcal{F}}\abs{Q}.
		\end{align*}
		The estimate for $\mathcal{E}_2$ follows from positivity and weak $(1,1)$ boundedness of $M$ and inequality \eqref{Seeger2}. Indeed we have
		\begin{align*}
			\mathcal{E}_2&\lesssim\frac{1}{\lambda\alpha}\sum\limits_{n=1}^{s/Cs_w}\sum\limits_{m=1}^n\sum\limits_{r=0}^{\Delta s2^{n+2}-1}\norm[\Big]{\sum\limits_{i\equiv r(mod\;\Delta s2^{n+2})}(\Gamma_{m,s}^i-H_m^i)*B_{i-s}^n}_1\\
			&\lesssim\frac{1}{\lambda\alpha}\sum\limits_{n=1}^{s/Cs_w}n\Delta s2^n2^{-\delta_1 s}\sum\limits_{Q\in\mathcal{F}}\norm{b_{_Q}^n}_1\\
			&\lesssim\frac{1}{\lambda}2^{-\delta s}\sum\limits_{Q\in\mathcal{F}}\abs{Q}.
		\end{align*}
		To estimate $\mathcal{E}_3$, we repeatedly use Cauchy-Schwartz inequality, $L^2$ boundedness of $M$ and \eqref{Seeger1} to get,
		\begin{align*}
			\mathcal{E}_3&\lesssim\frac{1}{\lambda^2\alpha^2}\sum\limits_{n=1}^{s/Cs_w}\frac{s}{s_w}\sum\limits_{m=1}^n n\sum\limits_{r=0}^{\Delta s2^{n+2}-1}\Delta s2^n\norm[\Big]{\sum\limits_{i\equiv r(mod\;\Delta s2^{n+2})}\Gamma_{m,s}^i*B_{i-s}^n}_2^2\\
			&\lesssim\frac{1}{\lambda^2\alpha}\sum\limits_{n=1}^{s/Cs_w}\frac{\Delta^2}{s_w}n^2 s^3 2^{2n}2^{-2\delta_1 s}\sum\limits_{Q\in\mathcal{F}}\norm{b_{_Q}^n}_1\\
			&\lesssim\frac{1}{\lambda^2}2^{-\delta s}\sum\limits_{Q\in\mathcal{F}}\abs{Q}.
		\end{align*}
		Thus the first inequality follows. For the weighted inequality, we apply Chebyshev's inequality to get,
		\begin{align*}
			v(E_\lambda^s)&\lesssim\frac{1}{\lambda\alpha}\sum\limits_{n=1}^{\infty}\sum\limits_{i\in\Z}\sum\limits_{Q\in\mathcal{F}:l(Q)=2^{i-s}}\int\int\abs{(K_n^i-K_0^i)(x-y)}\abs{b_{_Q}^n(y)}\;dy\;v(x)dx\\
			&\lesssim\frac{1}{\lambda\alpha}\sum\limits_{n=1}^{\infty}\sum\limits_{i\in\Z}\sum\limits_{Q\in\mathcal{F}:l(Q)=2^{i-s}}\int\abs{b_{_Q}^n(y)}\frac{1}{2^{id}}\int_{\abs{x-y}\leq 2^{i+2}}v(x)dx\;dy\\
			&\lesssim\frac{1}{\lambda\alpha}\sum\limits_{n=1}^{\infty}\sum\limits_{i\in\Z}\sum\limits_{Q\in\mathcal{F}:l(Q)=2^{i-s}}\int\abs{b_{_Q}^n(y)}\frac{1}{2^{id}}\inf\limits_{z\in Q}\int_{\abs{x-z}\lesssim 2^i}v(x)dx\;dy\\
			&\lesssim\frac{1}{\lambda\alpha}\sum\limits_{n=1}^{\infty}\sum\limits_{i\in\Z}\sum\limits_{Q\in\mathcal{F}:l(Q)=2^{i-s}}\norm{b_{_Q}^n}_{L^1}\inf\limits_{z\in Q} Mv(z)\\
			&\lesssim\frac{1}{\lambda\alpha}\sum\limits_{n=1}^{\infty}\sum\limits_{Q\in\mathcal{F}}\norm{f_{_Q}^n}_{L^1}\inf\limits_{Q} Mv\\
			&\lesssim\frac{1}{\lambda}\sum\limits_{Q\in\mathcal{F}}\abs{Q}\inf\limits_Q Mv.
		\end{align*}	
	\end{proof}
	We now combine the estimates in \Cref{w_est1} into a single weighted estimate using an interpolation scheme given in \cite{FS1}. The proof follows similarly to that of estimate (5.9) in \cite{FS1}. We include a sketch of the proof for the convenience of the reader.
	\begin{lemma}
		\label{w_est2}
		Let $\theta_w=r_w^{-1}$, where $r_w$ as in \Cref{Rhi}. Then we have the following
		\[w(E_\lambda^s)\lesssim\frac{1}{\alpha\lambda^2} 2^{-s\delta(1-\theta_w)}\norm{f}_{L^1(M_{r_w}w)}.\]
	\end{lemma}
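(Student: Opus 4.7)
The plan is to combine the two estimates of \Cref{w_est1} via a Hölder-type interpolation at the reverse-Hölder exponent $r_w$, following the scheme of \cite{FS1}. The first estimate of \Cref{w_est1} carries the decay $2^{-\delta s}$ but only quadratic $\lambda^{-1}$ growth, while the second has the correct weighted structure $\sum_Q |Q|\inf_Q Mv$ but only linear $\lambda^{-1}$ growth; interpolating at exponent $r_w = 1+1/(c_d[w]_{A_\infty})$ trades between them to produce the decay $2^{-\delta s(1-\theta_w)}$ together with the required $\lambda^{-2}$ growth.

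Concretely, I would begin by applying Hölder's inequality
\[w(E_\lambda^s) \;\le\; |E_\lambda^s|^{1-\theta_w}\Bigl(\int_{E_\lambda^s} w^{r_w}\Bigr)^{\theta_w},\]
and then substitute the first bound of \Cref{w_est1} into the Lebesgue factor (which contributes the required decay $2^{-\delta s(1-\theta_w)}$) and the second bound, applied with $v=w^{r_w}$, into the other factor. The identity $M(w^{r_w}) = (M_{r_w}w)^{r_w}$ converts the infimum $\inf_Q M(w^{r_w})$ into $\inf_Q (M_{r_w}w)^{r_w}$; since $\lambda \in (0,1)$, the combined power $\lambda^{-(2-\theta_w)}$ is absorbed into $\lambda^{-2}$.

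The main obstacle will be to collapse the resulting product of sums into a single sum of the form $\sum_{Q\in\mathcal{F}} |Q|\inf_Q M_{r_w}w$, which by the Calderón--Zygmund stopping property $\alpha|Q| \le \int_Q |f|$ together with $\inf_Q M_{r_w}w \le M_{r_w}w(x)$ on $Q$ is dominated by $\alpha^{-1}\|f\|_{L^1(M_{r_w}w)}$. Applying Hölder or Jensen to the product of sums globally only gives the reverse inequality, so the remedy, as in \cite{FS1}, is to run the interpolation cube by cube: since $(K_n^i-K_0^i)\ast b_{_Q}^n$ is supported in a dilate of $Q$ of side length $\sim 2^s\,l(Q)$, each term appearing in the proof of \Cref{w_est1} localizes to a single stopping cube $Q$, yielding per-cube versions of both inequalities. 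Running the Hölder splitting cube by cube (where $|Q|^{1-\theta_w}(|Q|\inf_Q(M_{r_w}w)^{r_w})^{\theta_w} = |Q|\inf_Q M_{r_w}w$ exactly, using $r_w\theta_w=1$) and then summing over $Q\in\mathcal{F}$ produces the stated bound. Finally, the reverse Hölder inequality of \Cref{Rhi} is used implicitly through the appearance of $M_{r_w}w$ on the right-hand side, which matches the exponent dictated by the $A_\infty$-characteristic of $w$.
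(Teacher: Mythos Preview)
Your diagnosis of the obstacle is exactly right, but the proposed remedy does not work. The level set $E_\lambda^s$ is the super-level set of
\[
F(x)=\sum_{n=1}^{s/Cs_w}\sup_{k}\Bigl|\sum_{i>k}(K_n^i-K_0^i)*B_{i-s}^n(x)\Bigr|,
\]
which at each point $x$ receives contributions from many cubes simultaneously: for fixed $i$ the supports $\{2^{s+5}Q:l(Q)=2^{i-s}\}$ overlap with multiplicity $\sim 2^{sd}$, and all scales $i$ contribute as well. Consequently $E_\lambda^s$ cannot be split into per-cube super-level sets with comparable thresholds, and there is no ``per-cube version'' of the first inequality of \Cref{w_est1} (whose proof relies on the almost-orthogonality in \eqref{Seeger1} across the full family). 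Running H\"older cube by cube therefore has no starting point; and the global H\"older you wrote down only yields $(\sum_Q|Q|)^{1-\theta_w}\bigl(\sum_Q|Q|\inf_Q(M_{r_w}w)^{r_w}\bigr)^{\theta_w}$, which dominates $\sum_Q|Q|\inf_Q M_{r_w}w$ rather than being dominated by it.

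The paper avoids localising $E_\lambda^s$ altogether. The crucial feature of \Cref{w_est1} that you are not using is that its second inequality holds for \emph{every} non-negative weight $v$, not just for $w^{r_w}$. This freedom is what pushes the interpolation inside the sum: via the identity $v^{\theta_w}=\theta_w(1-\theta_w)\int_0^\infty\min(v,t)\,t^{\theta_w-2}\,dt$ and Lemma~6 of \cite{FS1} (which, roughly, applies the weighted bound to truncations of $v$ to obtain $\int_{E_\lambda^s}\min(v,t)\,dx\lesssim\lambda^{-2}\sum_Q|Q|\min(t\,2^{-\delta s},\inf_Q Mv)$ with the minimum already inside the sum), one integrates in $t$ term by term to get $\lambda^{-2}2^{-\delta s(1-\theta_w)}\sum_Q|Q|(\inf_Q Mv)^{\theta_w}$. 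Setting $v=w^{r_w}$ then gives the statement. So the missing idea is to exploit the universality of the weighted estimate in $v$, rather than trying to localise the set.
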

	\begin{proof}
		We first note that using \Cref{w_est1} and Lemma 6 of \cite{FS1}, we get
		\[\int_{E_\lambda^s}\min(v(x),t)\;dx\lesssim\frac{1}{\lambda^2}\sum\limits_{Q\in\mathcal{F}}\abs{Q}\min(t2^{-\delta s},\inf\limits_Q Mv),\;\;t\in\R^+.\]
		As $v(x)^{\theta_w}=\theta_w(1-\theta_w)\int_0^\infty \min(v(x),t)t^{-2+\theta_w}\;dt$, we have
		\begin{align*}			
			\int_{E_\lambda^s}v(x)^{\theta_w}\;dx&=\theta_w(1-\theta_w)\int_{E_\lambda^s}\int_0^\infty \min(v(x),t)t^{-2+\theta_w}\;dt\;dx\\
			&\lesssim\frac{1}{\lambda^2}\theta_w(1-\theta_w)\sum\limits_{Q\in\mathcal{F}}\abs{Q}\int_0^\infty\min(t2^{-\delta s},\inf\limits_Q Mv)t^{-2+\theta_w}\;dt\\
			&\lesssim\frac{1}{\alpha\lambda^2} 2^{-s\delta (1-\theta_w)}\norm{f}_{L^1((Mv)^{\theta_w})}.
		\end{align*}
		Choosing $v=w^{r_w}$ produces the desired inequality.
	\end{proof}
	Now that we have all the ingredients, we complete the proof of \Cref{Mainw}. 
	We choose
	\[p_w=1+\frac{1}{\log([w]_{A_{\infty}}+1)}\]
	and $r_w$ as in \Cref{Rhi}. To estimate $\mathcal{H}_{k,1}$, we use Chebyshev's inequality, \Cref{FSi1}, and \Cref{Rhi} to get,
	\begin{align*}
		w\Bigl\{x\in E^c:\sup\limits_k\;\abs{\mathcal{H}_{k,1}(x)}>\frac{\alpha}{5}\Bigr\}&\lesssim\frac{1}{\alpha^{p_w}}\;p_w^{2p_w}p_w'(r_w')^{p_w+\frac{p_w}{p_w'}}\;\norm{g}_{L^p_w(M_{r_w}w)}^{p_w}\\
		&\lesssim\frac{1}{\alpha}\;p_w^{2p_w}p_w'(r_w')^{2p_w-1}\;\norm{f}_{L^1(M_{r_w}w)}\\
		&\lesssim\frac{1}{\alpha}\log([w]_{A_{\infty}}+1)[w]_{A_\infty}[w]_{A_1}\norm{f}_{L^1(w)}.
	\end{align*}
	The estimate for $\mathcal{H}_{k,2}$ follows from \Cref{FSi2} and \Cref{Rhi}, Indeed
	\begin{align*}
		&w\Bigl\{x\in E^c:\sup\limits_k\;\abs{\mathcal{H}_{k,2}(x)}>\frac{\alpha}{5}\Bigr\}\\
		&\lesssim\frac{1}{\alpha^{p_w}}\Bigl(\sum\limits_{n=1}^{\infty}\sum\limits_{m=n+1}^{\infty}\norm{T_m^*(f^n)}_{L^{p_w}(w)}\Bigr)^{p_w}\\
		&\lesssim\frac{1}{\alpha^{p_w}}\;p_w^{2p_w}p_w'(r_w')^{2p_w-1}\;\Bigl(\sum\limits_{n=1}^{\infty}\sum\limits_{m=n+1}^{\infty}2^{-c_22^{m-1}}\Bigl(\int\abs{f^n}^{p_w}M_{r_w}(w)\Bigr)^{\frac{1}{p_w}}\Bigr)^{p_w}\\
		&\lesssim\frac{1}{\alpha}\;p_w^{2p_w}p_w'(r_w')^{2p_w-1}\;\Bigl(\sum\limits_{n=1}^{\infty}2^{-2^n(c_2-c_1(1-\frac{1}{p_w}))}\Bigl(\int\abs{f^n}M_{r_w}(w)\Bigr)^{\frac{1}{p_w}}\Bigr)^{p_w}\\		&\lesssim\frac{1}{\alpha}\;p_w^{2p_w}p_w'(r_w')^{2p_w-1}\;\norm{f}_{L^1(M_{r_w}w)}\Bigl(\sum\limits_{n=1}^{\infty}2^{-\frac{2^n}{p_w}}\Bigr)^{p_w}\\
		&\lesssim\frac{1}{\alpha}\log([w]_{A_{\infty}}+1)[w]_{A_\infty}[w]_{A_1}\norm{f}_{L^1(w)},
	\end{align*}
	where we used the fact that $c_1<c_2$ and $1<p_w<10$. The estimate for $\mathcal{H}_{k,3}$ follows from \Cref{FSi2} and \Cref{Rhi}. Indeed we have
	\begin{align*}
		&w\Bigl\{x\in E^c:\sup\limits_k\;\abs{\mathcal{H}_{k,3}(x)}>\frac{\alpha}{5}\Bigr\}\\
		&\lesssim\frac{1}{\alpha^{p_w}}\Bigl(\sum\limits_{m=1}^{\infty}\norm[\Big]{T_m^*\bigl(\sum\limits_{n=m}^{\infty}g^n\bigr)}_{L^{p_w}(w,E^c)}\Bigr)^{p_w}\\
		&\lesssim\frac{1}{\alpha^{p_w}}\;p_w^{2p_w}p_w'(r_w')^{2p_w-1}\;\Bigl(\sum\limits_{m=1}^{\infty}2^{-c_22^{m-1}}\Bigl(\int\abs[\Big]{\sum\limits_{n=m}^{\infty}g^n(x)}^{p_w}M_{r_w}(w\chi_{E^c})(x)\;dx\Bigr)^{\frac{1}{p_w}}\Bigr)^{p_w}\\
		&\lesssim\frac{1}{\alpha}\;p_w^{2p_w}p_w'(r_w')^{2p_w-1}\;\Bigl(\sum\limits_{m=1}^{\infty}2^{-c_2 2^{m-1}}\Bigl(\int\sum\limits_{Q\in\mathcal{F}}\sum\limits_{n=1}^{\infty}\abs{g_{_Q}^n(x)}M_{r_w}(w\chi_{E^c})(x)\;dx\Bigr)^{\frac{1}{p_w}}\Bigr)^{p_w}\\
		&\lesssim\frac{1}{\alpha}\;p_w^{2p_w}p_w'(r_w')^{2p_w-1}\;\Bigl(\int\sum\limits_{Q\in\mathcal{F}}\sum\limits_{n=1}^{\infty}\abs{g_{_Q}^n(x)}M_{r_w}(w\chi_{E^c})(x)\;dx\Bigr)\\
		&\lesssim\frac{1}{\alpha}\;p_w^{2p_w}p_w'(r_w')^{2p_w-1}\;\Bigl(\sum\limits_{Q\in\mathcal{F}}\int_Q\abs{f(y)}\frac{1}{\abs{Q}}\int_Q M_{r_w}(w\chi_{E^c})(x)\;dx\;dy\Bigr)\\
		&\lesssim\frac{1}{\alpha}\;p_w^{2p_w}p_w'(r_w')^{2p_w-1}\;\Bigl(\sum\limits_{Q\in\mathcal{F}}\int_Q\abs{f(y)}M_{r_w}(w\chi_{\R^d\setminus(100d)Q})(y)\;dy\Bigr)\\
		&\lesssim\frac{1}{\alpha}\;p_w^{2p_w}p_w'(r_w')^{2p_w-1}\;\norm{f}_{L^1(M_{r_w}w)}\\
		&\lesssim\frac{1}{\alpha}\log([w]_{A_{\infty}}+1)[w]_{A_\infty}[w]_{A_1}\norm{f}_{L^1(w)},	
	\end{align*}
	where we used the fact that $M_{r_w}(w\chi_{\R^d\setminus(100d)Q})(x)\lesssim M_{r_w}(w\chi_{\R^d\setminus(100d)Q})(y)$ for $x,y\in Q$ in the third to last step. The proof of this fact follows exactly as that of $M$  (see page 159 of \cite{GR}).
	
	The estimate for $\mathcal{H}_{k,4}$ follows from inequalities \eqref{CZ+M}, \eqref{CZw} and $\norm{\sum\limits_{n=1}^{\infty}f^n}_{L^1(w)}\lesssim\norm{f}_{L^1(w)}$.
	We now turn to the estimate of $\mathcal{H}_{k,b}$. Using the support properties of the kernel \eqref{support}, we get
	\begin{align*}
		w\Bigl\{x\in E^c:\sup\limits_k\;\abs{\mathcal{H}_{k,b}(x)}>\frac{\alpha}{5}\Bigr\}\leq &w\Bigl\{x\in E^c:\sup\limits_k\;\abs[\Big]{\sum\limits_{i>k}\sum\limits_{n=1}^{\infty}\sum\limits_{s=3}^{\infty}(K_n^i-K_0^i)*B_{i-s}^n(x)}>\frac{\alpha}{5}\Bigr\}\\
		\leq&w\Bigl\{x\in E^c:\sup\limits_k\;\abs[\Big]{\sum\limits_{i>k}\sum\limits_{n=1}^{\infty}\sum\limits_{s=3}^{Cns_w-1}(K_n^i-K_0^i)*B_{i-s}^n(x)}>\frac{\alpha}{10}\Bigr\}\\
		&+w\Bigl\{x\in E^c:\sup\limits_k\;\abs[\Big]{\sum\limits_{i>k}\sum\limits_{n=1}^{\infty}\sum\limits_{s=Cns_w}^{\infty}(K_n^i-K_0^i)*B_{i-s}^n(x)}>\frac{\alpha}{10}\Bigr\},
	\end{align*}
	Using Chebyshev's inequality, the first term is dominated by
	\begin{align*}
		&\frac{1}{\alpha}\sum\limits_{n=1}^{\infty}\sum\limits_{i\in\Z}\sum\limits_{s=3}^{Cns_w-1}\sum\limits_{Q\in\mathcal{F}:l(Q)=2^{i-s}}\int\int\abs{(K_n^i-K_0^i)(x-y)}\abs{b_{_Q}^n(y)}\;dy\;w(x)dx\\
		&\lesssim\frac{1}{\alpha}\sum\limits_{n=1}^{\infty}\sum\limits_{i\in\Z}\sum\limits_{s=3}^{Cns_w-1}\sum\limits_{Q\in\mathcal{F}:l(Q)=2^{i-s}}\int\abs{b_{_Q}^n(y)}\frac{1}{2^{id}}\int_{\abs{x-y}\leq 2^{i+2}}w(x)dx\;dy\\
		&\lesssim\frac{1}{\alpha}\sum\limits_{n=1}^{\infty}\sum\limits_{i\in\Z}\sum\limits_{s=3}^{Cns_w-1}\sum\limits_{Q\in\mathcal{F}:l(Q)=2^{i-s}}\int\abs{b_{_Q}^n(y)}\frac{1}{2^{id}}\inf\limits_{z\in Q}\int_{\abs{x-z}\lesssim 2^i}w(x)dx\;dy\\
		&\lesssim\frac{1}{\alpha}\sum\limits_{n=1}^{\infty}\sum\limits_{i\in\Z}\sum\limits_{s=3}^{Cns_w-1}\sum\limits_{Q\in\mathcal{F}:l(Q)=2^{i-s}}\norm{b_{_Q}^n}_{L^1}\inf\limits_Q Mw\\
		&\lesssim\frac{s_w}{\alpha}\sum\limits_{n=1}^{\infty}n\sum\limits_{Q\in\mathcal{F}}\norm{b_{_Q}^n}_{L^1}\inf\limits_Q Mw\\
		&\lesssim\frac{s_w}{\alpha}\sum\limits_{Q\in\mathcal{F}}\int \abs{f_{_Q}(x)}\log\log\left(e^2+\frac{\abs{f_{_Q}(x)}}{\alpha}\right)\;dx\inf\limits_Q Mw\\
		&\lesssim\frac{1}{\alpha}[w]_{A_1}[w]_{A_\infty}\log([w]_{A_\infty}+1)\int \abs{f(x)}\log\log\left(e^2+\frac{\abs{f(x)}}{\alpha}\right)\;w(x)dx.
	\end{align*}
	Let $c>0$ be a constant such that $c\delta(1-\theta_w)\sum\limits_{s=1}^{\infty}2^{-\delta s(1-\theta_w)/3}=1$ and
	\[\lambda_s=\frac{c\delta(1-\theta_w)}{10}2^{-\delta (s-Cs_w)(1-\theta_w)/3}.\]
	Applying \Cref{w_est2} to the weight $w$ with $\lambda=\lambda_s$, we get
	\begin{align*}
		&w\Bigl\{x\in \R^d:\sup\limits_k\;\abs[\Big]{\sum\limits_{i>k}\sum\limits_{n=1}^{\infty}\sum\limits_{s=Cns_w}^{\infty}(K_n^i-K_0^i)*B_{i-s}^n(x)}>\frac{\alpha}{10}\Bigr\}\\
		&\leq w\Bigl\{x\in \R^d:\sum\limits_{s=Cs_w}^{\infty}\sum\limits_{n=1}^{s/Cs_w}\sup\limits_k\;\abs[\Big]{\sum\limits_{i>k}(K_n^i-K_0^i)*B_{i-s}^n(x)}>\sum\limits_{s=1}^{\infty}\frac{c\delta(1-\theta_w)}{10}2^{-\delta s(1-\theta_w)/3}\alpha\Bigr\}\\
		&\leq \sum\limits_{s=Cs_w}^{\infty}w(E_{\lambda_s}^s)\\
		&\lesssim\frac{1}{\alpha}\sum\limits_{s=Cs_w}^{\infty}\frac{1}{(1-\theta_w)^2}2^{2\delta (s-Cs_w)(1-\theta_w)/3}2^{-s\delta(1-\theta_w)}\norm{f}_{L^1(M_{r_w}w)}\\
		&\lesssim\frac{2^{-s_wC\delta(1-\theta_w)}}{\alpha(1-\theta_w)^2}\norm{f}_{L^1(M_{r_w}w)}\sum\limits_{s=Cs_w}^{\infty}2^{-\delta (s-Cs_w)(1-\theta_w)/3}\\
		&\lesssim\frac{2^{-s_wC\delta(1-\theta_w)}}{\alpha(1-\theta_w)^3}\norm{f}_{L^1(M_{r_w}w)}\\
		&\lesssim\frac{1}{\alpha}[w]_{A_1}\norm{f}_{L^1(w)}.
	\end{align*}
	Hence the proof of \Cref{Mainw} concludes.

\end{document}